\documentclass[leqno,11pt,a4paper]{amsart}

\usepackage{amssymb,latexsym}
\usepackage{graphicx}
\usepackage{hyperref}
\usepackage{amsmath}
\usepackage{amsfonts}
\usepackage{amssymb}
\usepackage{float}
\setcounter{MaxMatrixCols}{30}

\setlength{\textheight}{24.5cm}
\setlength{\textwidth}{15.35cm}
\setlength{\oddsidemargin}{0.3cm}
\setlength{\evensidemargin}{0.3cm}
\setlength{\topmargin}{0cm}

\theoremstyle{plain} 
\newtheorem{theorem}{\bf Theorem}[section]

\newtheorem{lemma}[theorem]{\bf Lemma}

\newtheorem{proposition}[theorem]{\bf Proposition}

\theoremstyle{definition} 
\newtheorem{definition}[theorem]{\bf Definition}
\newtheorem{remark}[theorem]{\bf Remark}

\newtheorem{example}[theorem]{\bf Example}

\newcommand{\q}[1]{``#1''}

\title[Tom \& Jerry Triples with an Application to Fano 3-folds]{Tom \& Jerry Triples with an Application to Fano 3-folds}

\author[Vasiliki~Petrotou]{Vasiliki~Petrotou}
\address{Vasiliki~Petrotou\\ Department of Mathematics \\
University of Ioannina\\
Ioannina, 45110 \\
Greece}
\email{v.petrotou@uoi.gr}

\begin{document}

\subjclass[2010]{
Primary 14M05, 14J45; Secondary 13H10, 14E99.
}
\keywords{ Gorenstein rings, Fano 3-folds, Birational Geometry, Unprojection.
}

\begin{abstract}
 \  Unprojection is a theory due to  Reid which constructs more complicated rings starting from  simpler data.
The idea of unprojection is intended for serial use. Papadakis and Neves in \cite{NP2} develop a theory of parallel unprojection.
In the present work we develop a new method of unprojection. Starting from a codimension $3$ ideal defined by the pfaffians of a $5\times 5$ 
skewsymmetric matrix, we use parallel unprojection of Kustin-Miller type in order to construct Gorenstein rings of codimension $6$.
We give two applications. These are two families of  codimension $6$  Fano $3$-folds, in weighted projective 
space which are described in detail in the entries with identifier numbers $14885$ and $12979$ respectively in Brown's Graded Ring Database \cite{BR}.
\end{abstract}

\maketitle

\section{Introduction} 

\label{sec!introduction}

Gorenstein rings play an important role in algebraic geometry. The anticanonical ring of a Fano $n$-fold and the canonical ring of a regular surface of general type are examples of Gorenstein rings which appear often in algebraic geometry.

  If  $R=k[x_1,\dots, x_n]/I$ is  a Gorenstein graded ring which is a quotient of a polynomial ring in $n$ variables divided by a homogeneous ideal $I$ and  the codimension of  $I$, denoted by $\text{codim} \  I$,  is less or equal to $3$ there are good structure theorems.
More precisely,  Serre proved that if  $\text{codim} \  I= 1$ or $2$  then $R$ is  a complete intersection while Buchsbaum and Eisenbud \cite{BE} showed that if  $\text{codim} \  I= 3$ then $I$ is generated by the $2n\times 2n$ pfaffians of a skewsymmetric $(2n+1)\times (2n+1)$ matrix.

An important question is if there exists  a structure theorem for  $\text{codim} \  I \geq 4$.

 In $1983$, Kustin and Miller trying unsuccesfully to answer this question~\cite{KM1, KM2, KM3, KM4, KM5}, introduced a procedure \cite{KM} which constructs more complicated Gorenstein rings from simpler ones by increasing codimension. 
This procedure is called Kustin-Miller unprojection. In $1995$, Reid rediscovered what was essentially the same procedure working with Gorenstein rings arising from K3 surfaces and $3$-folds. Unprojection was studied by Papadakis and Reid in a scheme theoretic formulation \cite{P2, PR}.

Geometrically, unprojection, as indicated by its name, is an inverse of certain projections and can be considered as a modern and explicit version of Castelnuovo contractibility theorem. It has many applications in algebraic geometry. In particular, in the construction of new interesting algebraic surfaces and $3$-folds  \cite{AL, ABR, BG, BKR, BS2, NP1, NP2}. In explicit birational geometry it allows to write down explicitly varieties, morphisms and rational maps that Minimal Model Program says they exist \cite{CM, CPR}. Also, it has some applications in algebraic combinatorics \cite{BP1, BP2, BP3, BP4}.    

Unprojection can be used many times over in an inductive way to produce Gorenstein rings of arbitrary codimension, whose properties are nevertheless, controled by just a few equations as a new unprojection variable is adjoined. Neves and Papadakis \cite{NP2} develop a theory of parallel unprojection.

 The main aim of this paper is to develop a new format of unprojection, which we call Tom and Jerry triples. As a first step, we set conditions in the entries of a $5\times 5$ skewsymmetric matrix $M$ such that the ideal $I$ which is defined by the pfaffians of $M$ to be contained in three codimension $4$ complete intersection ideals $J_1, J_2, J_3$. For this purpose, we use two well-known ways to set up the unprojection data $D\subset Y$ in the case that $Y$ is a codimension~$3$ scheme defined by the pfaffians of a skewsymmetric matrix and $D$ is a codimension~$4$ complete intersection. These are Tom and Jerry unprojections \cite{P1, P2,  R1}. Under the conditions that we set, $M$ can be considered simultaneously as Tom or Jerry matrix in the ideals $J_1, J_2, J_3$. 

As a second step, we fix three codimension~$4$ complete intersection ideals $J_1, J_2, J_3$. Starting from an ideal of codimension~$3$ defined by the pfaffians of a $5\times 5$ skewsymmetric matrix $M$ which is Tom matrix in each of ideals $J_1,J_2,J_3$ we construct,  using parallel Kustin-Miller unprojection,  a Gorenstein ring of codimension equal to $6$. As an application, we construct two families of Fano $3$-folds of codimension~$6$ embedded in weighted projective space which correspond to the entries with ID: $14885$ and ID: $12979$ in Brown's Graded Ring Database~\cite{BR}.

Section~$2$ gives a short overview of the background that we need in this paper. Section~$3$ reviews some existing results related to Kustin-Miller unprojection. Section~$4$ describes a number of alternative ways which guarantee that a codimension $3$ ideal defined by the pfaffians of a $5\times 5$ skewsymmetric matrix is contained in three codimension $4$ complete intersection ideals $J_1, J_2, J_3$. In  Section~$5$ we study in detail one of the cases of Section~$4$. The main result is Theorem \ref{main!thm} which establishes, using the theory of parallel Kustin-Miller unprojection, the construction of a codimension $6$ Gorenstein ring.  We have checked with computer algebra program Macaulay2 \cite{GS} that similar results also hold for the remaining cases studied in Section~$4$.

Section~$6$ contains applications. In Subsections \ref{constr!1} and  \ref{construct!id12979}, we construct two specific $4$-dimensional quotients of the ring studied in Section~$5$ and we prove, partially with use of computer algebra, that the $\text{Proj}$ of each of them corresponds to the families of Fano $3$-folds which we mentioned above.

\section{Notation}
In this section we give some preliminary notions that we use throughout the paper.

\begin{definition}
Assume that $M=[m_ {ij}]$ is a $n\times n$ skewsymmetric matrix,
(   i.e.,  $ m_{ji}=-m_{ij}$  and $m_{ii}=0$ ) with entries in a Noetherian ring $R$.
For $1\leq i\leq n$,  denote by $M_i$ the skewsymmetric submatrix of $M$ obtained by deleting the ith row and ith column of $M$.
\begin{enumerate}
\item If $ n$ is even then $\det M=f(m_ {ij})^2$.
\\
 The polynomial $f(m_ {ij})$ is called the \textit{pfaffian} of the matrix $M$ and is denoted by  $Pf(M)$.
\item  If $ n$ is odd  by \textit{pfaffians} of $M$ we mean the set  $\{ Pf(M_1),Pf(M_2),\dots, Pf(M_n)\}$.
\end{enumerate}
\end{definition}

\begin{example}
\begin{enumerate}
\item For  $n=2$ :
\[
Pf (
\begin{pmatrix} 
0 & m_{12}\\
-m_{12} & 0 
\end{pmatrix}
) = m_{12}.
\]
\item  For  $ n=5$ :
\[
Pf 
 ( \begin{pmatrix} 
0 & m_{12} & m_{13} & m_{14}& m_{15}\\
-m_{12} & 0  & m_{23} & m_{24}& m_{25}\\
-m_{13} & -m_{23}  & 0 & m_{34}&  m_{35}\\
-m_{14} & -m_{24}  & -m_{34} & 0&  m_{45}\\
-m_{15} & -m_{25}  & -m_{35} & -m_{45}&  0
\end{pmatrix}) =  \{ Pf(M_1),Pf(M_2),\dots , Pf(M_5)\}
\]
\end{enumerate}
where,
\[
 Pf(M_1) = m_{23}m_{45} -m_{24}m_{35}+m_{25}m_{34}, 
\]
\[
 Pf(M_2) = m_{13}m_{45} -m_{14}m_{35}+m_{15}m_{34} ,
\]
\[
 Pf(M_3) = m_{12}m_{45} -m_{14}m_{25}+m_{15}m_{24}, 
\]
\[
  Pf(M_4) = m_{12}m_{35} -m_{13}m_{25}+m_{15}m_{23},
\]
\[
 Pf(M_5) = m_{12}m_{34} -m_{13}m_{24}+m_{14}m_{23}.
\]
\end{example}

\begin{definition}
Assume that $R=k[x_1,\dots,x_n]$ is the polynomial ring in $n$ variables over a field $k$ and $I$ $\subset$ $ R$  is an ideal of $R$.
The \textit{codimension} of $I$ is given by $\text{ codim I}=  n-  \dim R/I$, where $\dim R/I$ is the Krull dimension of the ring $R/I$. 
\end{definition}

\begin{definition}
A Noetherian local ring $R$ is a \textit{Gorenstein ring} if it has finite injective dimension as  $R$-module. More generally, a Noetherian ring $R$ is called Gorenstein  if for every maximal ideal  $\mathfrak m$ of $R$ the localization $R_{\mathfrak m}$ 
is Gorenstein.

An ideal $I$ of a Gorenstein ring $R$ is called Gorenstein if the quotient ring $R/I$ is Gorenstein.
\end{definition}

\begin{theorem} \label{thm!serbusc}
Let $R=k[x_1,\dots, x_n] / I$  be the polynomial ring in $n$ variables divided by a homogeneous ideal $I$.
\begin{enumerate} 
\item (Serre)  If  $\text{codim I}=  1$ or  $2$  then
\begin{center}
  $R$ is Gorenstein $\Leftrightarrow$  $I$ is a complete intersection.
\end{center}

\,

\item (Buchsbaum-Eisenbud \cite{BE}) If  $\text{codim I}= 3$  then
\begin{center}

$ R$ is Gorenstein $\Leftrightarrow$  $I$ is generated by the $2n\times 2n$ pfaffians
  of a skewsymmetric $ (2n+1)\times (2n+1)$
matrix with entries in $R$.
\end{center}

\end{enumerate}
\end{theorem}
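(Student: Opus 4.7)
The plan is to treat the three codimension cases separately, each time using the equivalence between being Gorenstein and having a self-dual minimal free resolution up to a shift.

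For codimension $1$, I would argue as follows. Since $R=k[x_1,\dots,x_n]$ is a polynomial ring, hence a UFD, every height-one unmixed ideal is principal. A Gorenstein quotient is Cohen--Macaulay, so $I$ is unmixed of height one, and therefore $I=(f)$ for a single nonzerodivisor $f$. Conversely, $R/(f)$ is Gorenstein whenever $R$ is and $f$ is a nonzerodivisor. For codimension $2$, I would invoke the Hilbert--Burch structure theorem: any Cohen--Macaulay ideal of codimension $2$ in a regular ring admits a minimal free resolution
\[
0 \to R^{m} \xrightarrow{\varphi} R^{m+1} \to R \to R/I \to 0,
\]
and $I$ is generated (up to units) by the maximal minors of $\varphi$. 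If in addition $R/I$ is Gorenstein, the resolution is self-dual up to twist, which forces $m=1$; hence $I$ is generated by two elements, i.e.~a complete intersection. The converse is immediate from the Koszul complex, which is self-dual.

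For codimension $3$, the Buchsbaum--Eisenbud theorem, the strategy is to exploit that a Gorenstein quotient of codimension $3$ has projective dimension $3$ and admits a self-dual minimal free resolution
\[
0 \to R \xrightarrow{\psi^{t}} R^{k} \xrightarrow{\Phi} R^{k} \xrightarrow{\psi} R \to R/I \to 0.
\]
The main technical step is to upgrade $\Phi$ to a genuinely skew-symmetric matrix by a suitable change of basis on the middle free module, which is the structural content of \cite{BE}. Once this is achieved, the rank count forces $k$ to be odd, say $k = 2n+1$, and a direct computation identifies the entries of $\psi$ with the $2n\times 2n$ pfaffians of $\Phi$. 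For the converse direction, one checks that for a skew-symmetric matrix of odd size whose ideal of pfaffians has the expected codimension, the associated Buchsbaum--Eisenbud complex is exact and visibly self-dual up to twist, so its cokernel is Gorenstein of codimension $3$.

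The main obstacle is precisely this self-duality upgrade in codimension $3$: passing from a middle map $\Phi$ that is only abstractly isomorphic to its transpose to an honestly antisymmetric matrix requires a careful change of basis controlled by the symmetry of the pairing induced by the duality isomorphism of the resolution. The other parts (Serre's cases and the acyclicity/duality verification for the pfaffian complex) are comparatively formal once one has Hilbert--Burch and the self-duality of minimal Gorenstein resolutions in hand.
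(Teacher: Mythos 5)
The paper gives no proof of this theorem; it is quoted as a classical structure result and the reader is referred to \cite[Corollary~21.20]{E} and \cite[Theorem~3.4.1]{BH}. Your sketch follows exactly the standard route taken in those references (principal ideals in a UFD for codimension $1$, Hilbert--Burch plus self-duality of the Gorenstein resolution for codimension $2$, and the Buchsbaum--Eisenbud skew-symmetrization of the middle map together with the acyclicity of the pfaffian complex for codimension $3$), so it is consistent with the proof the paper points to, with the understanding that the skew-symmetrization step you correctly identify as the crux is precisely the technical content of \cite{BE} and is not carried out in your outline.
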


For a proof of Theorem \ref{thm!serbusc} see e.g.,  \cite[ Corollary~ 21.20]{E} and \cite[ Theorem~ 3.4.1]{BH}.

\begin{definition}\label{def!basket}
Let $X$ be a quasi-projective variety over  $\mathbb{C}$ and $x,y,z$ be coordinates of $\mathbb{C}^3$. Suppose that the group ${\mathbb{Z}}_{r}$ of rth roots of unity acts on  $\mathbb{C}^3$ via:
\begin{center}
$(x,y,z)\mapsto (\epsilon^{a}x, \epsilon^{b}y, \epsilon^{c}z )$,
\end{center}
where  $\epsilon$ is a primitive fixed rth root of unity and  $a, b, c$ are integers. A singularity $P\in X$ is a \textit{quotient singularity of type $ \frac{1}{r}(a,b,c)$} if $(X,P)$ is isomorphic to an analytic neighborhood of  $ (\mathbb{C}^3,0)/{\mathbb{Z}}_{r}$. A \textit{basket of singularities} is a collection of quotient singularities  of type $\{ \frac{1}{r_1}(a_1,b_1,c_1), \frac{1}{r_2}(a_2,b_2,c_2),\dots,$ 
$\frac{1}{r_s}(a_s,b_s,c_s)\}.$
\end{definition}

\begin{definition} \label{def!fano3fol}
 A \textit{Fano $3$-fold} is a normal projective variety $X$ of dimension $3$ which satisfies the following conditions:
\begin{enumerate}
\item  the anticanonical divisor $-K_{X}$ is ample

\item $X$ belongs to  the Mori category, that is it has at worst $\mathbb{Q}$-factorial terminal singularities.
\end{enumerate}
The maximal integer $f$ such that  $-K_{X}$ is divisible by some Weil divisor $A$, $ -K_{X}= fA$ is called the \textit{Fano index} of $X$. The Weil divisor $A$ for which $ -K_{X}= fA$ is called a \textit{primitive ample divisor}. The graded ring  $R(X, A)=\bigoplus \nolimits _{n \geq 0} H^0(X,\mathcal{O}_ X(nA))$ of a Fano $3$-fold $X$ with a primitive ample divisor $A$  is Gorenstein,  finitely generated and $X \cong   \text{Proj} \  R(X, A) $. The Hilbert Series $P_{X,A}(t)$ of $(X,A)$ is defined to be the Hilbert Series of the graded ring  $R(X,A)$. 
\end{definition}

In the case of Fano $3$-folds of index $f$,  Suzuki (\cite[ Lemma ~ 1.2]{SU}) proves that in Definition~\ref{def!basket} we can assume that $b=-a$, $c=f$ and $r$ is coprime to $a,b,c$. Therefore, for a Fano $3$-fold of index $f$,  a basket of singularities is a collection of singularity germs $ \frac{1}{r}(a,-a,f)$.

\section{Unprojection Review}

\label{sec!general_notations}

In this section, we recall some basic facts related to Kustin-Miller unprojection. For more details see \cite{P1, P2, PR, R1}.

\subsection{Kustin-Miller Unprojection}
 Assume that $R$ is a Gorenstein local ring and $J\subset R$ is a codimension $1$ ideal such that the quotient ring $R/J$ be Gorenstein.
Then,  $\operatorname{Hom}_R(J,R)$ is generated as an $R$-module by the inclusion map $i\colon J\rightarrow  R$ and an extra homomorphism $\phi \colon J\rightarrow  R$.

\begin{definition} \label{Def!kunmilring}
We define as  \textit{Kustin-Miller unprojection ring},  $\operatorname{ Unpr}(J,R)$, of the pair $J\subset R$ the graph of $\phi$,  that is the quotient 
\[ \operatorname{Unpr}(J,R) =\frac{R[T]}{(Tr-\phi(r): r \in J)},\]

where $T$ is a new variable, the unprojection variable.
\end{definition}

\begin{theorem}(Papadakis-Reid) 
The ring  $\operatorname{Unpr}(J,R)$  is Gorenstein.
\end{theorem}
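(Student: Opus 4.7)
The plan is to follow the Papadakis--Reid approach, first identifying the structure of $\operatorname{Hom}_R(J,R)$, then realizing the unprojection ring $S := \operatorname{Unpr}(J,R)$ concretely, and finally computing its dualizing module to see it is cyclic.

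First, I would dualize the short exact sequence $0 \to J \to R \to R/J \to 0$ against $R$. Because $J$ has codimension one in the Cohen--Macaulay ring $R$, it contains a non-zerodivisor and $\operatorname{Hom}_R(R/J,R) = 0$; this gives the exact sequence
\[
0 \to R \xrightarrow{i} \operatorname{Hom}_R(J,R) \to \operatorname{Ext}^1_R(R/J,R) \to 0.
\]
By local duality for the Gorenstein ring $R$, $\operatorname{Ext}^1_R(R/J,R) \cong \omega_{R/J}$, and since $R/J$ is Gorenstein this is a cyclic $R/J$-module. Lifting its generator produces the map $\phi$ of the statement and shows $\operatorname{Hom}_R(J,R) = R \cdot i + R \cdot \phi$, so the defining ideal of $S$ in $R[T]$ is precisely $(Tr - \phi(r) : r \in J)$.

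Second, I would realize $S$ concretely inside the total quotient ring $Q(R)$. The identity $r\phi(s) = \phi(rs) = s\phi(r)$ for $r,s \in J$ shows that for any non-zerodivisor $r_0 \in J$, the element $\phi(r_0)/r_0 \in Q(R)$ is well-defined and independent of the choice of $r_0$; moreover it satisfies $(\phi(r_0)/r_0)\cdot r = \phi(r)$ for every $r \in J$. This gives an embedding $S \hookrightarrow Q(R)$ sending $T \mapsto \phi(r_0)/r_0$, identifying $S$ with the $R$-subalgebra $R[\phi(r_0)/r_0]$ of $Q(R)$.

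Third, I would compute the canonical module $\omega_S$ from the presentation $S = R[T]/I_\phi$ with $I_\phi = (Tr - \phi(r) : r \in J)$. The ring $R[T]$ is a polynomial extension of a Gorenstein ring and hence is itself Gorenstein, so $\omega_S = \operatorname{Ext}^c_{R[T]}(S, R[T])$ where $c = \operatorname{grade}(I_\phi, R[T])$. The key technical step is to build a free resolution of $S$ over $R[T]$ by combining resolutions of $R$ and $R/J$, as in the Kustin--Miller mapping cone construction; one then verifies that this resolution has self-dual shape, so that $\operatorname{Ext}^c_{R[T]}(S, R[T])$ is generated by a single element and is isomorphic to $S$ as $S$-module. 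This isomorphism $\omega_S \cong S$ gives the conclusion.

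The main obstacle is the third step, namely constructing the resolution of $S$ over $R[T]$ and showing it is self-dual up to shift. This is the core of the Kustin--Miller machinery and uses in an essential way both the explicit form of $\phi$ coming from local duality and the assumption that $R/J$ is Gorenstein, which is what forces the dual of the resolution of $R/J$ to reassemble correctly into the Kustin--Miller complex.
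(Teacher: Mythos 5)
The paper does not prove this statement at all: it is quoted as background with attribution to Papadakis--Reid, so there is no internal proof to compare against. Your first step (dualizing $0\to J\to R\to R/J\to 0$, identifying $\operatorname{Ext}^1_R(R/J,R)$ with $\omega_{R/J}$, and lifting a generator of this cyclic module to get $\phi$) is correct and is indeed how the two-generator structure of $\operatorname{Hom}_R(J,R)$ is established in the literature. But your third step, which is where the theorem actually lives, is a placeholder rather than an argument. Saying that one ``builds a free resolution of $S$ over $R[T]$ by combining resolutions of $R$ and $R/J$, as in the Kustin--Miller mapping cone construction'' and ``then verifies that this resolution has self-dual shape'' defers the entire content of the proof: the Kustin--Miller complex is not a formal mapping cone of the two given resolutions, but requires constructing explicit chain maps (one induced by $\phi$, one comparing the dual of the resolution of $R$ with the resolution of $R/J$, plus higher homotopy-type data), and proving both that the resulting complex is exact and that it is self-dual. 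That is the substance of Kustin and Miller's paper and of Papadakis's ``unprojection with complexes,'' not a verification. As written, the proposal restates the theorem in the language of resolutions without proving it.

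Two further points. First, your second step claims an embedding $S\hookrightarrow Q(R)$ identifying $S$ with $R[\phi(r_0)/r_0]$; what is immediate is only a surjection $S\twoheadrightarrow R[\phi(r_0)/r_0]$, since the kernel of $R[T]\to Q(R)$, $T\mapsto \phi(r_0)/r_0$, could a priori be strictly larger than $(Tr-\phi(r):r\in J)$. Injectivity is essentially equivalent to $S$ being unmixed of the right dimension, which is part of what must be proven, so this step is circular if used (fortunately you do not rely on it later). Second, for the record, the proof that the paper implicitly invokes --- Papadakis--Reid, ``Kustin--Miller unprojection without complexes'' --- deliberately avoids your route: after the $\operatorname{Hom}$ computation it proceeds by induction on dimension, cutting by a general non-zerodivisor and checking that unprojection commutes with this reduction, with the low-dimensional base case handled by a direct socle/duality computation. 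Either route can be made to work, but yours requires carrying out the resolution construction in full, which you have not done.
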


The simplest example of Kustin-Miller  unprojection which nevertheless has important consequences in birational geometry is the 
example of a hypersurface which contains a codimension $2$ complete intersection.

\begin{example} (Reid's $Ax-By$ argument)

\noindent Assume that
\[ R=k[x,y,z,w]/(Ax-By),  \ \ \ \  A,B \in k[x,y,z,w]\]
and
\[J=(x,y) \subset R.\]
Then, $\operatorname{Hom}_R(J,R)$ is generated as $R$-module by $i$ and $\phi$. The $R$-module homomorphism $\phi\colon J\rightarrow  R$ is the unique homomorphism such that  $\phi(x)=B$ and  $\phi(y)=A$. 

\noindent Moreover,
 \[\operatorname{Unpr}(J,R) = R[T] / (Tx-B, Ty-A).\]
 \end{example}

\begin{remark}
\begin{enumerate}
\item  Geometrically, $\mathop{\mathrm{Spec}} (\operatorname{Unpr}(J,R))$ is birational to  $\mathop{\mathrm{Spec}}( R)$ and  $V(J)$ has been contracted.
\item The ring  $\operatorname{Unpr}(J,R)$ has typically more complicated structure than both $R$, $R/J$.
\item The ring  $\operatorname{Unpr}(J,R)$ is useful to construct and analyse Gorenstein rings in terms of simpler ones.
\item The $R$-module 
        \[ \operatorname{Hom}_R(J,R)= \{ f\colon J\rightarrow R \  \  |  f \  \    R-homomorphism\} \]
contains important information related to the birational geometry of $\mathop{\mathrm{Spec}}( R)$ with respect to $V(J)$.
\end{enumerate}
\end{remark}

\subsection{Parallel Kustin-Miller Unprojection} \label{subs!papnevthm}
 Sometimes, especially for applications, it is necessary to perform not only one but several  Kustin-Miller unprojections. Neves and Papadakis \cite{NP2} develop a theory which is called parallel Kustin-Miller unprojection. More presicely, they set sufficient conditions on a  positively graded Gorenstein ring $R$ and a finite set of codimension $1$ ideals which ensure the series of unprojections. Furthermore, they give a simple and explicit description of the end product ring which corresponds to the unprojection of the ideals. In this subsection, we recall their formulation. 

Let $\mathcal{L}$ be  a nonempty finite indexing set and $\mathcal{M}\subset \mathcal{L}$ be a nonempty subset.
Assume that  $R$ is a Gorenstein positively graded ring  and $\{J_{\alpha},  \alpha \in \mathcal{L}\}$ is a set of  codimension $1$ homogeneous ideals of $R$ such that, for all $\alpha \in \mathcal{L}$, the quotient ring  $ R/J_{\alpha}$  is Gorenstein.

We fix graded $R$-module homomorphisms $\phi_{\alpha}\colon J_{\alpha} \rightarrow R$ such that    $\operatorname{Hom}_R(J_{\alpha},R)$ is generated as an $R$-module by  $\{i_{\alpha},\phi_{\alpha}\}$, where $i_{\alpha}\colon J_{\alpha}\rightarrow R$ is the inclusion map.
Assume that for distinct $\alpha$, $\beta$ $\in \mathcal{L}$  there exists a homogeneous element $r_{\alpha\beta}\in R$ with $\deg  \    r_{\alpha\beta} = \deg \  \phi_{\alpha}$ such that
\begin{equation}
 (\phi_{\alpha}+r_{\alpha\beta}i_\alpha)(J_{\alpha})\subset J_{\beta}
\end{equation}
and for all distinct $\alpha , \beta \in \mathcal{L}$,
\begin{equation}
\text{codim}_R(J_{\alpha} + J_{\beta})\geq 2.
\end{equation}
Denote by $\phi_{\alpha\beta}= \phi_{\alpha}+r_{\alpha\beta}i_\alpha$. Fix distinct $\alpha$, $\beta$ $\in \mathcal{L}$. 
Then, there exists a unique homogeneous element $A_{\beta\alpha}\in R$ of degree  $\deg \  \phi_{\alpha} +  \deg \  \phi_{\beta}$   such that
\[\phi_{\beta\alpha}(\phi_{\alpha\beta}(s))= A_{\beta\alpha}s \ \ 
\text{for all} \ s\in J_{\alpha}. \]

Assume that  $\{T_u | u\in \mathcal{M}\}$ is a set of new variables with degree of $T_u$  equal to  $\deg \  \phi_{u}$ for all $u\in \mathcal{M}$.
Denote by $R_{\mathcal{M}}$   the graded ring given as quotient of polynomial ring  $ R[ T_{u}  |   u\in \mathcal{M}]$ by the ideal generated by the set
\[ \{T_{u}s-\phi_{u}(s) | u\in \mathcal{M}, s\in J_{u} \} \cup  \{(T_{v}+r_{vu})(T_{u}+r_{uv})-A_{vu} | u,v\in \mathcal{M}, u\neq v \}.\]
Consider the natural ring homomorphism  $\pi \colon R[  T_{u} |  u\in \mathcal{M}] \rightarrow R_{\mathcal{M}}$. 
Given $w\in \mathcal{L} \setminus \mathcal{M}$, we denote by $J_{\mathcal{M},w}\subset R_{\mathcal{M}}$ the ideal generated by the image under $\pi$ of the subset
\[  J_{w}\cup \{T_{u}+r_{uw} | u\in  \mathcal{M} \}.  \]

\begin{theorem} ({\bf Neves-Papadakis}) \label{thm!nevespapadakis} 
\begin{enumerate}
\item  The ring $R_\mathcal{M}$ is Gorenstein with dimension equal to the dimension of $R$. Moreover, the natural map  $R\rightarrow R_{\mathcal{M}}$ is injective.
\item Assume there exists $w \in \mathcal{L} \setminus \mathcal{M}$. The map $\phi_{w}\colon J_{w}\rightarrow R$ is extended to an $R_{\mathcal{M}}$-homomorphism 
$\Phi_{\mathcal{M},w}\colon J_{\mathcal{M},w}\rightarrow R_{\mathcal{M}}$ uniquely determined by the property 
 \[\Phi_{\mathcal{M},w}(T_{u}+r_{uw})=A_{uw}-(T_{u}+r_{uw})r_{wu},\]
for all $u\in \mathcal{M}$.
\item  Under the assumptions of $(2)$, the codimension of the ideal  $J_{\mathcal{M},w}$ is equal to $1$ and the quotient ring $R_{\mathcal{M}}/J_{\mathcal{M},w}$ is Gorenstein.
The $R_{\mathcal{M}}$-module $\operatorname{Hom}_{R_{\mathcal{M}}}(J_{\mathcal{M},w}, R_{\mathcal{M}} )$  is generated by the natural inclusion $ i_{\mathcal{M},w}\colon J_{\mathcal{M},w}\rightarrow R_{\mathcal{M}}$   and the $R_{\mathcal{M}}$-homomorphism   $\Phi_{\mathcal{M},w}$. The ring   $R_{\mathcal{M}\cup  \{w\}}$  is the Kustin-Miller unprojection ring (in the sense of the Definition ~ \ref{Def!kunmilring}) of the pair $J_{\mathcal{M},w}\subset R_{\mathcal{M}}$.                                   
\end{enumerate}

\end{theorem}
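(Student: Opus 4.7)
My plan is to prove Theorem~\ref{thm!nevespapadakis} by induction on $|\mathcal{M}|$, using the Papadakis--Reid (single) Kustin--Miller theorem at each inductive step, and treating parts (1), (2), (3) simultaneously so that the induction carries enough information.

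\textbf{Base case.} When $\mathcal{M} = \emptyset$, $R_{\mathcal{M}} = R$ is trivially Gorenstein, and for $w \in \mathcal{L}$ we take $\Phi_{\emptyset, w} = \phi_w$, making (2) and (3) amount to the hypotheses already imposed on $(J_w, R)$ together with the definition of the Kustin--Miller unprojection ring. A slightly less trivial version, taking $|\mathcal{M}| = 1$ as the base, is the single Kustin--Miller unprojection and is immediate from Papadakis--Reid.

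\textbf{Inductive step.} Assume the theorem for all subsets of $\mathcal{L}$ of size smaller than $|\mathcal{M}|$. Fix any $u \in \mathcal{M}$ and set $\mathcal{M}' = \mathcal{M} \setminus \{u\}$. The inductive hypothesis applied to $\mathcal{M}'$ (taking $w = u$) yields a Gorenstein ring $R_{\mathcal{M}'}$ of the same dimension as $R$, the codimension-one ideal $J_{\mathcal{M}', u} \subset R_{\mathcal{M}'}$ with Gorenstein quotient, and the homomorphism $\Phi_{\mathcal{M}', u}$ generating $\operatorname{Hom}_{R_{\mathcal{M}'}}(J_{\mathcal{M}', u}, R_{\mathcal{M}'})$ together with the inclusion. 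Applying the single Kustin--Miller theorem (Papadakis--Reid) to this pair produces a new Gorenstein ring; I will verify that it is isomorphic to the quotient presentation of $R_{\mathcal{M}}$ given in the statement. The two presentations share the relations $T_u s = \phi_u(s)$ for $s \in J_u$ coming from $J_{\mathcal{M}',u} \supset J_u$; the remaining relations $T_u(T_v + r_{vu}) = A_{uw}'$ produced by unprojecting along the generators $T_v + r_{vu} \in J_{\mathcal{M}', u}$ are exactly the mixed relations $(T_u + r_{uv})(T_v + r_{vu}) - A_{uv}$ (for $v \in \mathcal{M}'$), after rearranging using the definition of $\Phi_{\mathcal{M}', u}$ on these elements given in part~(2) of the hypothesis. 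This identification proves part~(1) for $\mathcal{M}$.

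\textbf{Parts (2) and (3) for $\mathcal{M}$.} Fix $w \in \mathcal{L} \setminus \mathcal{M}$. I define $\Phi_{\mathcal{M}, w}$ by prescribing $\Phi_{\mathcal{M}, w}(s) = \phi_w(s)$ for $s \in J_w$ and $\Phi_{\mathcal{M}, w}(T_u + r_{uw}) = A_{uw} - (T_u + r_{uw}) r_{wu}$ for $u \in \mathcal{M}$, then checking well-definedness of this $R_{\mathcal{M}}$-module map. The check reduces to verifying that the prescribed values respect both the relations $T_u s - \phi_u(s) = 0$ (for $s \in J_u$) and the mixed relations $(T_u + r_{uw})(T_u + r_{wu}) - A_{uw}$-style identities; the key algebraic fact needed here is the symmetric identity $\phi_{\beta \alpha}(\phi_{\alpha \beta}(s)) = A_{\beta \alpha} s$ established already in the setup (together with similar identities obtained by permuting indices among $u, v, w$). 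The codimension claim $\operatorname{codim}_{R_{\mathcal{M}}} J_{\mathcal{M},w} = 1$ will follow by comparing Krull dimensions: $\dim R_{\mathcal{M}}/J_{\mathcal{M},w} = \dim R_{\mathcal{M}'}/J_{\mathcal{M}',w}$ by an elimination argument using the relations $T_u + r_{uw} \in J_{\mathcal{M},w}$ together with the hypothesis $\operatorname{codim}_R(J_\alpha + J_\beta) \geq 2$. The Gorenstein property and generation statement for $\operatorname{Hom}_{R_{\mathcal{M}}}(J_{\mathcal{M}, w}, R_{\mathcal{M}})$ then follow from general properties of codimension-one Gorenstein ideals in Gorenstein rings, once we verify that $\Phi_{\mathcal{M}, w}$ is not expressible as multiplication by an element of $R_{\mathcal{M}}$ composed with the inclusion.

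\textbf{Main obstacle.} The delicate part is not the formal induction but the compatibility check: showing that the explicit presentation of $R_{\mathcal{M}}$ via the two families of relations matches, on the nose, the iterated Kustin--Miller unprojection of $J_{\mathcal{M}', u} \subset R_{\mathcal{M}'}$. This requires that the elements $A_{\beta \alpha}$ and the shift parameters $r_{\alpha \beta}$ satisfy suitable symmetry/compatibility identities modulo the ideals $J_\alpha, J_\beta$, so that applying unprojection in two different orders produces the same ring. Establishing this, together with the well-definedness of $\Phi_{\mathcal{M}, w}$ on the mixed generators $T_u + r_{uw}$, is where all the algebraic content of the theorem lives; everything else is a bookkeeping wrapper around the Papadakis--Reid single-step theorem.
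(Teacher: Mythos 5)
The paper does not prove this theorem: it is quoted verbatim from Neves--Papadakis \cite{NP2} and used as a black box, so there is no internal proof to compare against. Your inductive strategy --- peel off one index $u\in\mathcal{M}$, apply the inductive hypothesis to $\mathcal{M}'=\mathcal{M}\setminus\{u\}$ with $w=u$, run a single Papadakis--Reid unprojection on the pair $J_{\mathcal{M}',u}\subset R_{\mathcal{M}'}$, and match the result with the explicit presentation of $R_{\mathcal{M}}$ --- is in fact the strategy of \cite{NP2}, and your identification of where the difficulty sits is accurate. But as written the proposal is a roadmap, not a proof: every step that carries the theorem's actual content is named and then deferred. Concretely, (a) the well-definedness of $\Phi_{\mathcal{M},w}$ on the mixed generators requires not just the defining identity $\phi_{\beta\alpha}(\phi_{\alpha\beta}(s))=A_{\beta\alpha}s$ but genuine three-index compatibility statements (congruences for $A_{uv}$ modulo $J_w$ and for the $r$'s), which are nontrivial lemmas in \cite{NP2} and which you invoke only as ``similar identities obtained by permuting indices''; (b) the codimension-$1$ and Gorenstein claims for $J_{\mathcal{M},w}$ rest on an isomorphism $R_{\mathcal{M}}/J_{\mathcal{M},w}\cong R/J_w$ obtained by eliminating the $T_u$ via $T_u\equiv -r_{uw}$, and the role of the hypothesis $\mathrm{codim}_R(J_\alpha+J_\beta)\ge 2$ in making that elimination legitimate is exactly what needs to be checked; you assert the conclusion without the argument.

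One step in your sketch is also logically too weak rather than merely unfinished. For the generation of $\operatorname{Hom}_{R_{\mathcal{M}}}(J_{\mathcal{M},w},R_{\mathcal{M}})$ you propose to verify only that $\Phi_{\mathcal{M},w}$ is not of the form $r\cdot i_{\mathcal{M},w}$. That is necessary but not sufficient: the quotient $\operatorname{Hom}(J_{\mathcal{M},w},R_{\mathcal{M}})/\bigl(R_{\mathcal{M}}\cdot i_{\mathcal{M},w}\bigr)$ is a cyclic module over $R_{\mathcal{M}}/J_{\mathcal{M},w}$ (isomorphic to a twist of the canonical module), and you must show that the class of $\Phi_{\mathcal{M},w}$ \emph{generates} it, not merely that it is nonzero; in the graded setting this amounts to a degree count plus nonvanishing of the class in the generating degree, and that degree bookkeeping is absent. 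To turn the proposal into a proof you would need to state and prove the compatibility lemmas of point (a), carry out the elimination isomorphism of point (b), and replace the ``not a multiple of the inclusion'' criterion by the correct generation criterion.
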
 

\subsection{Tom and Jerry Unprojections} \label{Sec!Tomjer}

Assume that  $M=(m_{kl})$ is a $5\times5$ skewsymmetric matrix,  $I$ is the codimension $3$ ideal given by the pfaffians of $M$ and $J$  is a codimension $4$ complete intersection ideal.

We fix the ideal $J$. The question is to find suitable matrices $M$ such that $I$ is a subset of $J$. Tom and Jerry are two different answers to this question. 

Tom and Jerry are two families of unprojections which were originally defined and named by Reid. These families occur in many constructions of Gorenstein codimension $4$ ideals with $9\times16$ resolution (i.e., $9$ equations and $16$ first syzygies) and they can be considered as a type of deformation of the homogeneous coordinate rings of the Segre embedding  $\mathbb{P}^2 \times   \mathbb{P}^2 \subset  \mathbb{P}^8$ and $\mathbb{P}^1\times   \mathbb{P}^1\times   \mathbb{P}^1\subset  \mathbb{P}^7$ respectively.

Assume   $1\leq i\leq 5$. The matrix $M$ is called  $Tom_i$  in $J$ if the $m_{kl}$ entry of  $M$ belongs to the ideal $J$ whenever neither $k$  nor $l$ is equal to $i$. 
In other words, if we delete the ith row and ith column of $M$ the remaining entries are elements of the codimension $4$ ideal $J$. For an example of $M$ which is Tom$_1$ in $J=(z_1,z_2,z_3,z_4)$ see Subsection \ref{fund!calc}.

Assume  $1\leq i < j \leq 5$. The matrix $M$ is called  $Jerry_{ij}$ in $J$ if the $m_{kl}$ entry of  $M$  belongs to the ideal $J$ whenever either $k$ or $l$ is equal to $i$ or $j$.
In other words, all the entries of $M$ that belong to the ith row or ith column or jth row or jth column are elements of $J$ while the remaining entries of $M$ are not restricted.

\begin{remark}\label{rem!elemrow}
Assume $M$ is a Tom$_i$ matrix in $J$. Then with a sequence of elementary row and column operations we may get a matrix $M_1$ which is a Tom$_j$ matrix in $J$.

For example,  consider a matrix M which is Tom$_2$  matrix in J. Denote by $A$ the following invertible $5\times 5 $ matrix
\[
A =  \begin{pmatrix} 
0 &  1 &  0 &  0 &  0\\
1 & 0  & 0 & 0& 0\\
0& 0  & 1 & 0&  0\\
0 & 0  & 0 & 1&  0\\
0 & 0  & 0 & 0&  1
\end{pmatrix}
\]
The matrix $A M A^{t}$, where $A^{t}$ is the transpose of $A$, is a matrix which is Tom$_1$ in J.
Similarly, we can relate a matrix $M$ which is a Jerry$_{ij}$ matrix in $J$ to  a matrix $M_1$ which is a Jerry$_{i'j'}$ matrix in $J$.

\end{remark}

\subsection{Papadakis  Fundamental Calculation for Tom} \label{fund!calc}
Papadakis  \cite{P1} gives an explicit presentation of the unprojection ring for the Tom and Jerry families. In what follows,  we give a quick review of  the main steps of Papadakis fundamental calculation for Tom. The following matrix $N$ is a matrix which is Tom$_1$ in the codimension $4$ complete intersection ideal $J=(z_1,z_2,z_3,z_4)$.

We work over the  polynomial ring   $ R=k[x_k, z_k, m_{ij}^k]$  where $1\leq k \leq 4$, $2\leq i < j \leq 5$.
Denote   by
\[
N =  \begin{pmatrix} 
0 & x_{1} & x_{2} & x_{3}& x_{4}\\
-x_{1} & 0  & m_{23} & m_{24}& m_{25}\\
-x_{2} & -m_{23}  & 0 & m_{34}&  m_{35}\\
-x_{3} & -m_{24}  & -m_{34} & 0&  m_{45}\\
-x_{4} & -m_{25}  & -m_{35} & -m_{45}&  0
\end{pmatrix},
\]
where  
\[
 m_{ij}= \sum_{k=1}^{4} m_{ij}^k z_k.
\]

Let $ I$ be the ideal given by the pfaffians $P_0 , P_1 , P_2 , P_3 , P_4 $  of $N$. We can easily prove that $I \subset J.$

It holds that  $ P_1 , P_2 , P_3 , P_4$ are linear in  $z_1, z_2, z_3, z_4$. Then, there exists a unique  $4\times 4$ matrix $Q$ such that
\[
 \begin{pmatrix} 
P_{1} \\ P_{2} \\ P_{3} \\ P_{4}
\end{pmatrix} =   Q  \begin{pmatrix} 
z_{1} \\ z_{2} \\  z_{3} \\ z_{4}
\end{pmatrix}.
\]

We denote by  $Q_{i}$ the submatrix of $Q$ which occurs by deleting the ith row of  $Q$. For  $i = 1,\dots, 4$, let  $ H_i$  be the  $1\times 4$ matrix whose ith entry is equal to $ (-1)^{i+1}$ times the determinant of the submatrix of $Q_{i}$ obtained by deleting the ith column. For all $i, j$,  it holds that

\begin{center}
$x_{i}  H_{j} = x_{j}  H_{i}.$
\end{center}
Using the last equality, we can define four polynomials $g_1, g_2, g_3, g_4$ as follows. We fix \, $1\leq j\leq 4$ and we set

\begin{center}
$(g_1, g_2, g_3, g_4) =   H_{j}/x_{j}$.
\end{center}
We note that this definition is independent of the choice of $j$.

Denote by $\phi$ the map which is defined by
\begin{center}
$\phi \colon J/I \rightarrow R/I$
\end{center}

\begin{center}
 $ z_i \mapsto  g_i$.
\end{center}
It holds that $\operatorname{Hom}_{R/I}(J/I,R/I)$ is generated as $R/I$-module by the inclusion map $i$ and $\phi$. Moreover, the ideal 
\begin{center}
$(P_0 ,  P_1,   P_2,  P_3,  P_4,    Tz_1-g_1, Tz_2-g_2,   Tz_3-g_3,   Tz_4-g_4)$
\end{center}
of the polynomial ring $R[T]$ is Gorenstein of codimension $4$.

\,

\,

\section{Tom and Jerry Triples}

In this section we describe some ways to put a codimension $3$ ideal $I$ defined by the pfaffians of a $5\times 5$ skewsymmetric matrix $M$ inside three codimension $4$ complete intersection ideals $J_1, J_2, J_3$. One of the ways is studied in detail in Section \ref{Sec!mainresult} and leads to an application to Fano $3$-folds in Section \ref{Sec!app}. We have checked with the computer algebra program Macaulay2~\cite{GS} that also the remaining cases lead to construction of codimension $6$ Gorenstein ideals after a suitable choice of $J_1$, $J_2$, $J_3$.

Assume that 
\[
M=  \begin{pmatrix} 
0 & m_{12} & m_{13} & m_{14}& m_{15}\\
-m_{12} & 0  & m_{23} & m_{24}& m_{25}\\
-m_{13} & -m_{23}  & 0 & m_{34}&  m_{35}\\
-m_{14} & -m_{24}  & -m_{34} & 0&  m_{45}\\
-m_{15} & -m_{25}  & -m_{35} & -m_{45}&  0
\end{pmatrix}
\]
is a $5\times 5$ skewsymmetric matrix and $J_{1}$, $J_{2}$, $J_{3}$ are three complete intersection ideals of  codimension $4$. In each of the following cases we set conditions in the entries of $M$ such that the ideal $I$ of pfaffians is contained in each of the ideals $J_1, J_2, J_3$. We denote by $S_{5}$ the symmetric group of permutations of the set $\{1\dots 5\}$.

\subsection{ Tom $\&$  Tom $\&$ Tom Case}  \label{subs!the_main_players}
We define the following equivalence relation

\begin{center}
$ Tom_{i} +  Tom_{j} + Tom_{k}   \sim   Tom_{i'} +  Tom_{j'} + Tom_{k'} $, 
\end{center}
for $1\leq  i<  j<  k \leq 5$ and $1\leq  i'<  j'<  k' \leq 5$,  if there exists $\sigma \in S_{5}$ such that

\begin{center}
$\sigma(i)= i'$, \,  $\sigma(j)= j'$, \,  $\sigma(k)= k'.$

\end{center}

It is not difficult to see that there is only one equivalence class with representative  the element    $Tom_{1} + Tom_{2}+ Tom_{3}$.

\begin{definition}\label{gen!M}
 We say that $M$ is a  \textit{$Tom_{1} + Tom_{2}+ Tom_{3}$}  matrix if the entries of $M$ satisfy the following conditions:
\begin{center}
$m_{12}\in J_{3},$   \,  $m_{13}\in J_{2},$  \,  $m_{14},  m_{15}\in J_{2}\cap J_{3},$  \,  $m_{23}\in J_{1},$
\end{center}
\begin{center}
$m_{24},  m_{25}\in J_{1}\cap J_{3},$  \, $m_{34},  m_{35}\in J_{1}\cap J_{2},$  \,  $ m_{45}\in J_{1}\cap J_{2}\cap J_{3}$.
\end{center}
Then,  $M$ is Tom$_1$ in $J_1$, Tom$_2$ in $J_2$ and Tom$_3$ in $J_3$.
\end{definition} 

\subsection{  Jerry $\&$  Jerry  $\&$ Jerry Case}
Working as before, we define the following equivalence relation 

\begin{center}
 $ Jerry_{ij} +  Jerry_{kl} + Jerry_{mn}   \sim   Jerry_{i'j'} +  Jerry_{k'l'} + Jerry_{m'n'}$, 
\end{center}
for $1\leq  i<  j\leq 5$, $1\leq  k<  l\leq 5$, $1\leq  m<  n\leq 5$ and $(i,j), (k,l), (m,n)$  pairwise different and similarly for the indices $i',j',k',l',m',n'$,  if there exists $\sigma \in S_{5}$ such that

\begin{center}
$\sigma{(i)}= i'$, \,  $\sigma{(j)}= j'$, \,  $\sigma{(k)}= k'$, \,  $\sigma{(l)}= l'$, \,  $\sigma{(m)}= m'$, \,  $\sigma{(n)}= n'.$

\end{center}

In this case the following equivalence classes occur:

\begin{enumerate}
\item   $Jerry_{ij} + Jerry_{il} + Jerry_{in}$  with representative the element   $Jerry_{12} + Jerry_{13} + Jerry_{14}$
\item   $Jerry_{ij} + Jerry_{il} + Jerry_{jl}$  with representative the element   $Jerry_{12} + Jerry_{13} + Jerry_{23}$
\item   $Jerry_{ij} + Jerry_{il} + Jerry_{jn}$  with representative the element   $Jerry_{12} + Jerry_{14} + Jerry_{23}$
\item   $Jerry_{ij} + Jerry_{il} + Jerry_{mn}$  with representative the element  $Jerry_{14} + Jerry_{15} + Jerry_{23}$.
\end{enumerate}

\begin{definition} \label{def!3.3}
 We say that $M$ is a  

\begin{enumerate}
\item
\textit{$Jerry_{12} + Jerry_{13}+ Jerry_{14}$}  matrix if the entries of $M$ satisfy the following conditions:
\begin{center}
$m_{12}, m_{13}, m_{14},m_{15} \in  J_{1}\cap J_{2}\cap J_{3},$  \, $m_{23}\in J_{1}\cap  J_{2},$  \,  $m_{24}\in J_{1}\cap J_{3},$
\end{center}
\begin{center}
$m_{25}\in J_{1},$  \,   $m_{34}\in J_{2}\cap J_{3},$  \,  $m_{35}\in  J_{2},$ \,  $ m_{45}\in J_{3}$.
\end{center}
Then, M is Jerry$_{12}$ in $J_1$, Jerry$_{13}$ in $J_2$ and  Jerry$_{14}$ in $J_3$. A similar comment is true in the definitions that follow and we will not write it explicitly.
\item 
\textit{$Jerry_{12} + Jerry_{13}+ Jerry_{23}$}  matrix if the entries of $M$ satisfy the following conditions:
\begin{center}
$m_{12}, m_{13}\in  J_{1}\cap J_{2}\cap J_{3},$ \,  $m_{14},m_{15} \in  J_{1}\cap J_{2},$\, $m_{23}\in J_{1}\cap  J_{2}\cap J_{3},$
\end{center}
\begin{center}
$m_{24},m_{25}\in J_{1}\cap J_{3},$ \,  $m_{34},m_{35}\in J_{2}\cap J_{3},$ \, $ m_{45}$:  unconstrained.
\end{center}
\item 
\textit{$Jerry_{12} + Jerry_{14}+ Jerry_{23}$}  matrix if the entries of $M$ satisfy the following conditions:
\begin{center}
$m_{12}, m_{13}\in  J_{1}\cap J_{2}\cap J_{3}$, \,  $m_{14},m_{15} \in  J_{1}\cap J_{2}$, \,  $m_{23}\in J_{1}\cap J_{3}$, \,
\end{center}
\[
\  \  \  \, \, \, \, \  \,   m_{24}\in J_{1}\cap J_{2}\cap J_{3}, \, m_{25}\in J_{1}\cap J_{3}, \, m_{34}\in J_{2}\cap J_{3}, \,   
 m_{35}\in  J_{3}, \,  m_{45}\in  J_{2}.
\]
\item 
\textit{$Jerry_{14} + Jerry_{15}+ Jerry_{23}$}  matrix if the entries of $M$ satisfy the following conditions:
\begin{center}
$m_{12}, m_{13}\in  J_{1}\cap J_{2}\cap J_{3},$ \, $m_{14},m_{15} \in  J_{1}\cap J_{2},$ \, $m_{23}\in J_{3},$
\end{center}
\begin{center}
$m_{24}\in J_{1}\cap J_{3},$ \,  $m_{25}\in J_{2}\cap J_{3},$ \, $m_{34}\in J_{1}\cap J_{3},$ \, $m_{35}\in J_{2}\cap J_{3},$ \,  $ m_{45}\in  J_{1}\cap J_{2}$.
\end{center}
\end{enumerate}
\end{definition}

\,
\subsection{  Tom $\&$  Tom  $\&$ Jerry Case}
In this case  we define the following equivalence relation

\begin{center}
 $ Tom_{i} +  Tom_{j} + Jerry_{kl}   \sim   Tom_{i'} +  Tom_{j'} + Jerry_{k'l'}$, 
\end{center}
for $1\leq  i<  j\leq 5$, $1\leq  k<  l\leq 5$  and similarly for the indices $i',j',k',l'$,  if there exists $\sigma \in S_{5}$ such that

\begin{center}
$\sigma{(i)}= i'$, \,  $\sigma{(j)}= j'$, \,  $\sigma{(k)}= k'$, \,  $\sigma{(l)}= l'$.  

\end{center}

So, the following equivalence classes arise:

\begin{enumerate}
\item   $Tom_{i} + Tom_{j} + Jerry_{ij}$  with representative the element   $Tom_{1} + Tom_{2} + Jerry_{12}$
\item   $Tom_{i} + Tom_{j} + Jerry_{il}$  with representative the element   $Tom_{1} + Tom_{2} + Jerry_{13}$
\item   $Tom_{i} + Tom_{j} + Jerry_{kl}$  with representative the element   $Tom_{1} + Tom_{2} + Jerry_{34}$.
\end{enumerate}

\begin{definition}\label{def!3.5}
We say that $M$ is a  
\begin{enumerate}
\item
\textit{$Tom_{1} + Tom_{2}+ Jerry_{12}$}  matrix if the entries of $M$ satisfy the following conditions:
\begin{center}
$m_{12}\in  J_{3},$ \, $m_{13}, m_{14}, m_{15}\in   J_{2}\cap  J_{3},$ \, $m_{23}, m_{24}, m_{25}\in J_{1}\cap  J_{3},$ $m_{34}, m_{35}, m_{45}\in J_{1}\cap J_{2}$.
\end{center}
\item 
\textit{$Tom_{1} + Tom_{2}+ Jerry_{13}$}  matrix if the entries of $M$ satisfy the following conditions:
\begin{center}
$m_{12}\in  J_{3},$ \,  $m_{13}, m_{14},m_{15} \in  J_{2}\cap J_{3},$ \,  $m_{23}\in J_{1}\cap  J_{3},$
\end{center}
\begin{center}
$m_{24},m_{25}\in J_{1},$ \, $m_{34},m_{35}\in J_{1}\cap J_{2}\cap J_{3},$ \, $ m_{45}\in J_{1}\cap J_{2}$.
\end{center}
\item 
\textit{$Tom_{1} + Tom_{2}+ Jerry_{34}$}  matrix if the entries of $M$ satisfy the following conditions:
\begin{center}
$m_{12}$ :  unconstrained, \,  $m_{13},m_{14} \in  J_{2}\cap J_{3},$ \,  $m_{15}\in  J_{2},$
\end{center}
\begin{center}
$m_{23}, m_{24}\in J_{1}\cap J_{3},$ \, $m_{25}\in J_{1},$ \,  $m_{34},m_{35}, m_{45} \in  J_{1}\cap J_{2}\cap J_{3}$.
\end{center}
\end{enumerate}
\end{definition} 

\subsection{   Tom $\&$  Jerry  $\&$ Jerry Case}
In this last case  we define the following equivalence relation 
\begin{center}
 $ Tom_{i} +  Jerry_{jk} + Jerry_{lm}   \sim   Tom_{i'} +  Jerry_{j'k'} + Jerry_{l'm'}$ , 
\end{center}
for  $1\leq  i\leq 5$,   $1\leq  j<  k\leq 5$, $1\leq  l<  m\leq 5$ and $(j,k)\neq (l,m)$ and similarly for the indices $i',j',k',l',m'$, if there exists $\sigma \in S_{5}$ such that
\begin{center}
$\sigma{(i)}= i'$, \,  $\sigma{(j)}= j'$, \,  $\sigma{(k)}= k'$, \,  $\sigma{(l)}= l'$, \,  $\sigma{(m)}= m'$. 

\end{center}

So, we have the following equivalence classes:

\begin{enumerate}
\item $Tom_{i} + Jerry_{ik} + Jerry_{im}$ with representative the element $Tom_{1} + Jerry_{12} + Jerry_{13}$
\item $Tom_{i} + Jerry_{ik} + Jerry_{km}$ with representative the element $Tom_{1} + Jerry_{12} + Jerry_{23}$
\item $Tom_{i} + Jerry_{ik} + Jerry_{lm}$ with representative the element  $Tom_{1} + Jerry_{12} + Jerry_{34}$
\item $Tom_{i} + Jerry_{jk} + Jerry_{jm}$ with representative the element $Tom_{1} + Jerry_{23} + Jerry_{24}$
\item $Tom_{i} + Jerry_{jk} + Jerry_{lm}$  with representative the element $Tom_{1} + Jerry_{23} + Jerry_{45}$.
\end{enumerate}

\begin{definition} \label{def!3.7}
We say that $M$ is a  
\
\begin{enumerate}
\item
\textit{$Tom_{1} + Jerry_{12} + Jerry_{13}$}  matrix if the entries of $M$ satisfy the following conditions:
\begin{center}
$m_{12}, m_{13}, m_{14}, m_{15}\in   J_{2}\cap  J_{3},$ \,  $m_{23}\in   J_{1}\cap J_{2}\cap  J_{3},$ \, $ m_{24}, m_{25}\in J_{1}\cap  J_{2},$
\end{center}
\begin{center}
$m_{34}, m_{35}\in J_{1}\cap J_{3},$ \, $ m_{45}\in J_{1}$.
\end{center}
\,
\item 
\textit{$Tom_{1} + Jerry_{12} + Jerry_{23}$}  matrix if the entries of $M$ satisfy the following conditions:
\begin{center}
$m_{12}, m_{13}\in  J_{2}\cap J_{3},$ \, $ m_{14},m_{15} \in  J_{2},$ \, $m_{23},m_{24},m_{25} \in J_{1}\cap  J_{2}\cap  J_{3},$
\end{center}
\begin{center}
$m_{34},m_{35}\in J_{1}\cap J_{3},$ \,  $ m_{45}\in J_{1}$.
\end{center}
\,
\item 
\textit{$Tom_{1} + Jerry_{12} + Jerry_{34}$}  matrix if the entries of $M$ satisfy the following conditions:
\begin{center}
$m_{12}\in J_{2},$ \, $m_{13},m_{14} \in  J_{2}\cap J_{3},$ \, $m_{15}\in  J_{2},$ \, $m_{23}, m_{24}\in J_{1}\cap J_{2}\cap J_{3},$ \\
$m_{25}\in J_{1}\cap J_{2},$ \, $m_{34},m_{35}, m_{45} \in  J_{1}\cap J_{3}.$
\end{center}
\,
\item 
\textit{$Tom_{1} + Jerry_{23} + Jerry_{24}$}  matrix if the entries of $M$ satisfy the following conditions:
\begin{center}
$m_{12}\in J_{2}\cap J_{3},$  \,  $m_{13}\in  J_{2},$ \,  $m_{14}\in  J_{3},$ \,  $m_{15}$ :  unconstrained, \, $m_{23}, m_{24}, m_{25}, m_{34}\in J_{1}\cap J_{2}\cap J_{3},$  \,  $m_{35}\in J_{1}\cap J_{2},$ \, $ m_{45} \in  J_{1}\cap J_{3}$.
\end{center}

\newpage

\item 
\textit{$Tom_{1} + Jerry_{23} + Jerry_{45}$}  matrix if the entries of $M$ satisfy the following conditions:
\begin{center}
$m_{12}, m_{13}\in J_{2},$  \,  $m_{14}, m_{15}\in  J_{3},$ \, $m_{23}\in J_{1}\cap J_{2},$ 
\end{center}
\begin{center}
$m_{24}, m_{25}, m_{34}, m_{35}\in J_{1}\cap J_{2}\cap J_{3}$, \, $ m_{45} \in  J_{1}\cap J_{3}$.
\end{center}
\end{enumerate}
\end{definition}

\section{Tom $\&$ Tom $\&$ Tom Format} \label{Sec!mainresult}
In the present section, we specify three codimension $4$ complete intersection ideals $J_{1}, J_{2}, J_{3}$  and a codimension 3 ideal $I$ generated by the pfaffians of a specific Tom$_{1}$+Tom$_{2}$+Tom$_{3}$ matrix. We prove that this data satisfies the conditions for parallel Kustin-Miller unprojection established by Neves and Papadakis and recalled in Theorem~\ref{thm!nevespapadakis}. Moreover, using Theorem~\ref{thm!nevespapadakis} we give a description of the final ring as a quotient of a polynomial ring by a codimension $6$ ideal. This format will be used in Section~\ref{Sec!app} to prove the existence of two families of codimension $6$ Fano $3$-folds.

We work over the polynomial ring  $R= k[z_i, c_j]$,  where $1\leq  i \leq 7$  and   $1\leq  j\leq 25$.
Denote by $Tom(1,2,3)$ the following  $5\times 5$ skewsymmetric matrix 
\[
\small \begin{pmatrix} 
0 &  c_1 z_1+c_2 z_2 + c_3 z_3+c_4 z_6 & c_5 z_1+c_6 z_2 +c_7 z_4 + c_8 z_5 & c_9 z_1 + c_{10} z_2 &c_{11} z_1 +c_{12} z_2 \\
    & 0  &  c_{13}  z_2+c_{14} z_3 + c_{15} z_5+ c_{16} z_7 & c_{17} z_2 +c_{18} z_3  &  c_{19} z_2 +c_{20} z_3 \\
 &  & 0 & c_{21}  z_2+c_{22} z_5  & c_{23} z_2 +c_{24} z_5 \\
 &  -Sym  &  & 0& c_{25} z_2 \\
 & &  &  &  0
\end{pmatrix}
\]
which is Tom$_{1}$+Tom$_{2}$+Tom$_{3}$  matrix in the ideals
\begin{center}
$J_1= (z_2, z_3, z_5, z_7)$,  \,   $J_2= (z_1, z_2, z_4, z_5)$,  \,  $J_3= (z_1,z_2,z_3,z_6)$.
\end{center}
Let $I$ be the ideal generated by the pfaffians of $Tom(1,2,3)$.

\begin{proposition} \label{thm!assum}
 
(i) For all $t$ with $1\leq t\leq 3$,  the ideal  $J_t / I$ is a codimension $1$ homogeneous ideal of the quotient ring $R/I$ such that the ring $R/I/ J_t/I$ is Gorenstein.

(ii) For all $t, s$ with $1\leq t < s\leq 3$,  it holds that  $\mathop{codim}_{R/I}(J_t/I+J_s/I)= 3$.

\end{proposition}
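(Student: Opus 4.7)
The proof plan is to reduce both parts of the proposition to the single fact that $\mathrm{codim}_R I = 3$; once that is in hand, everything follows by codimension arithmetic in the Cohen--Macaulay ambient ring $R=k[z_1,\dots,z_7,c_1,\dots,c_{25}]$.

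The containments $I \subset J_t$ for $t=1,2,3$ are built into Definition~\ref{gen!M}: by Papadakis's fundamental calculation recalled in Subsection~\ref{fund!calc}, applied separately to each of the three Tom conditions, the pfaffians of a Tom$_i$ matrix automatically lie in the corresponding ideal. Each $J_t$ is generated by four distinct variables, hence is a complete intersection of codimension $4$, and $R/J_t$ is regular and in particular Gorenstein. Assuming $\mathrm{codim}_R I = 3$, Theorem~\ref{thm!serbusc}(2) identifies $R/I$ as a Gorenstein ring, so $R/I$ is Cohen--Macaulay, and in the polynomial ring $R$ codimensions subtract correctly, giving
\[
\mathrm{codim}_{R/I}(J_t/I) \;=\; \mathrm{codim}_R J_t \;-\; \mathrm{codim}_R I \;=\; 4-3 \;=\; 1,
\]
while $(R/I)/(J_t/I)\cong R/J_t$ is Gorenstein. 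This settles part~(i).

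For part~(ii), reading off the variable generators,
\begin{align*}
J_1+J_2 &= (z_1,z_2,z_3,z_4,z_5,z_7),\\
J_1+J_3 &= (z_1,z_2,z_3,z_5,z_6,z_7),\\
J_2+J_3 &= (z_1,z_2,z_3,z_4,z_5,z_6),
\end{align*}
each a complete intersection on six of the seven $z$-variables. Thus $\mathrm{codim}_R(J_t+J_s)=6$ for all $1\leq t<s\leq 3$, and the same codimension subtraction in the Cohen--Macaulay quotient $R/I$ yields $\mathrm{codim}_{R/I}(J_t/I + J_s/I) = 6-3 = 3$, as required.

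The main obstacle is of course the claim $\mathrm{codim}_R I=3$ itself. The upper bound $\mathrm{codim}_R I\leq 3$ is the standard bound on the grade of the pfaffian ideal of a $5\times 5$ skew matrix from the Buchsbaum--Eisenbud theory. The lower bound is the genuinely non-formal step, since the matrix Tom$(1,2,3)$ carries $25$ indeterminate coefficients $c_j$: the approach is to specialise the $c_j$ to numerical values making the pfaffians as generic as possible, exhibit an explicit regular sequence of length three among the specialised pfaffians in $k[z_1,\dots,z_7]$, and then invoke the fact that codimension can only decrease under the surjection $R\twoheadrightarrow R/(c_j-c_j^0)$, which forces $\mathrm{codim}_R I\geq 3$. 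A direct Macaulay2 computation provides a clean substitute, consistent with the verification style indicated by the author elsewhere in the paper.
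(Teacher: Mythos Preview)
Your proof is correct and follows essentially the same strategy as the paper: both reduce everything to the key claim $\operatorname{codim}_R I = 3$, obtain the upper bound from the general theory of pfaffian ideals, and verify the lower bound by specialising the parameters $c_j$ and appealing to a computer-algebra check, after which parts (i) and (ii) follow by identical dimension arithmetic using the Third Isomorphism Theorem. The only cosmetic difference is that the paper specialises fifteen of the $c_j$ to zero and runs Macaulay2 in the resulting $17$-variable ring, whereas you propose specialising all twenty-five and working in $k[z_1,\dots,z_7]$.
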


\begin{proof}
We first prove $(i)$.
According to the Third Isomorphism Theorem of rings 
\begin{equation} \label{iso!pols1} 
 R/I/J_1/I  \cong k[z_1,z_4,z_6,c_1,\dots, c_{25}], \,  \, \,     R/I/J_2/I \cong    k[z_3,z_6,z_7,c_1,\dots, c_{25}], 
\end{equation}

\begin{center}
 \label{iso!pols3} $R/I/J_3/I  \cong    k[z_4,z_5,z_7,c_1,\dots, c_{25}]$.
\end{center}
So, we conclude that for all t with $1\leq t\leq 3$,

\begin{center}
$ \dim \     R/I/J_t/I= 28.$
\end{center}
We claim that

\begin{center}

 $\dim \  R/I = 29 .$
\end{center}
Assume that  $\dim \  R/I = x. $
We denote by $\tilde{I}= (c_1,c_2,c_3,c_5,c_6,c_7,c_9,c_{12},c_{13},c_{15},c_{16},c_{18},c_{19},\\
c_{21},c_{23})$,
the ideal generated by some variables of  $R$. We set  $J^{new}= I+\tilde{I}$. The ideal $J^{new}$ is a homogeneous ideal of $R$. Hence, from Krull's principal ideal theorem it follows that 

\begin{center}
$\dim \ R/J^{new} \geq  x- 15.$
\end{center}
We call $\hat{I}$ the ideal obtained from the ideal $I$ by setting the variables  $c_1, c_2, c_3, c_5, c_6, c_7, c_9, c_{12}, \\
c_{13}, c_{15}, c_{16}, c_{18}, c_{19}, c_{21}, c_{23}$
be equal to zero. Using  the Third Isomorphism Theorem of rings  as before we have that
\[
 R/J^{new} \cong  k[z_1,\dots, z_7,c_4,c_8,c_{10},c_{11},c_{14},c_{17},c_{20},c_{22},c_{24},c_{25}]/\hat{I}.
\]
For the computation of the Krull  dimension of  
\begin{center}
$k[z_1,\dots, z_7,c_4,c_8,c_{10},c_{11},c_{14},c_{17},c_{20},c_{22},c_{24},c_{25}]/\hat{I}$
\end{center}
 we used the computer algebra program Macaulay2 \cite{GS}.

It occurs that 
\begin{center}
$\dim \  k[z_1,\dots, z_7,c_4,c_8,c_{10},c_{11},c_{14},c_{17},c_{20},c_{22},c_{24},c_{25}]/\hat{I} = 14$
\end{center}
and therefore 

\begin{center}
$ \dim \ R/J^{new} = 14.$
\end{center}
As a consequence, $x\leq 29$.   

It is well-known that, see for example \cite[Theorem~ 3.4.1(a)]{BH} the ideal generated by the pfaffians of a skewsymmetric matrix has codimension $\leq 3$.  Hence, \   $\text{codim} \  I \leq 3$.   Equivalently, by the definition of codimension

\begin{center}

$x\geq 29,$

\end{center}
which completes the proof of the claim. As a consequence,

\begin{center}
$\text{codim} \  I =  \dim \ R - \dim \ R/I = 3. $
\end{center}

Hence, by the second part of the Theorem \ref{thm!serbusc},  $R/I$ is a Gorenstein ring.
Using again the definition of codimension for all t with $1\leq t\leq 3$, we get

\begin{center}
$\text{codim} \  J_t/I=1$.
\end{center}
Due to the isomorphisms   (\ref{iso!pols1})  for all $t$ with $1\leq t\leq 3$,  the ring  $R/I/J_t/I$ is Gorenstein.

 We now prove $(ii)$. Third Isomorphism Theorem of rings implies that

\[R/I/(J_1/I+J_2/I)  \cong k[z_6,c_1,\dots, c_{25}], \, \, \, \,  R/I/(J_1/I+J_3/I) \cong  k[z_4,c_1,\dots, c_{25}],  \] 
\[R/I/(J_2/I+J_3/I) \cong  k[z_7,c_1,\dots, c_{25}].\]

From the later isomorphisms it holds that for $t, s$ with $1\leq t < s\leq 3$,
\[\dim \  R/I/(J_t/I+J_s/I)= 26.\]

We remind that $\dim \  R/I= 29$. Taking into account the definition of codimension we conclude that for all $t, s$ with $1\leq t < s\leq 3$,
\[\text{codim} \ (J_t/I+J_s/I)= 3.\]
\end{proof}

For all $t$, with $1\leq t\leq 3$, we denote by $i_t\colon  J_t/I \rightarrow R/I$ the  inclusion map. Our aim is to  define  $  \phi_t\colon  J_t/ I \rightarrow R/I$ for all $t$, with $1\leq t\leq 3$, and  prove that these maps satisfy the assumptions of the Theorem \ref{thm!nevespapadakis}. As a first step for the definition of $\phi_t$,  we relate Tom$_t$ matrix in $J_t$ (for the definition see Subsection \ref{Sec!Tomjer}) to the matrix $N$ which was defined in Subsection \ref{fund!calc}.

Assume $D$ is a Tom$_1$ matrix in $J_1$. It is clear that $D$ is a specialization of the matrix $N$. For an example see Equation~(\ref{Mat!examtom}) below.  

Assume $D$ is a Tom$_2$ matrix in $J_2$. Let $A$ be the  invertible $5\times 5 $ matrix in  Example ~ \ref{rem!elemrow}. The matrix $A  D A^{t}$, where $A^{t}$ is the transpose of $A$, is a specialization of the matrix N.  

Assume $D$ is a Tom$_3$ matrix in $J_3$. Denote by $B$ the following invertible $5\times 5 $ matrix
\[
B=  \begin{pmatrix} 
0 &  0 &  1 &  0 &  0\\
1 & 0  & 0 & 0& 0\\
0& 1  & 0 & 0&  0\\
0 & 0  & 0 & 1&  0\\
0 & 0  & 0 & 0&  1
\end{pmatrix}
\]
The matrix $B  D  B^{t}$ is a specialization of the matrix N.  

We remark that the ideal generated by the pfaffians of  the  matrix $A  D A^{t}$ is equal to the ideal generated by the pfaffians of  the  matrix $ D$. The same is true for the ideal generated by the pfaffians of  the  matrix $B  D B^{t}$.

\newpage

As a second step we apply the above considerations to obtain, for $t=1,2,3$, the matrix $Tom(1,2,3)$ in $J_t$ as a specialization of $N$.
Denote by $D_1$  the following matrix 
\begin{equation} \label{Mat!examtom}
\end{equation}
\[
\small \begin{pmatrix} 
0 &  x_1 & x_2 & x_3 & x_4 \\

    & 0  & c_1  z_2+ c_2 z_3+ c_3 z_5+ c_4 z_7  & c_5 z_2+ c_6 z_3+ c_7 z_5+ c_8 z_7  &  c_9 z_2+ c_{10} z_3 + c_{11} z_5+ c_{12} z_7\\

 &  & 0 &  c_{13} z_2+ c_{14} z_3+ c_{15} z_5+ c_{16} z_7 &  c_{17} z_2+ c_{18} z_3+ c_{19} z_5+ c_{20} z_7 \\

 &  -Sym  &  & 0&   c_{21} z_2+ c_{22} z_3+ c_{23} z_5+ c_{24} z_7 \\

 & &  &  &  0
\end{pmatrix}
\]

\,

The matrix $D_1$ is a Tom$_1$ matrix in $J_1$. $D_1$ is obtained from $N$ by the following substitutions
\begin{equation}
\label{eqn!ntod1} z_1= z_2, \,  z_2=z_3, \,  z_3=z_5, \, z_4=z_7
\end{equation}
and the obvious substitutions of $m_{ij}^k$ in terms of $c_l$.
We set 

\begin{equation}
\label{eqn!relatingtom1prim} c_7=c_8=c_{11}=c_{12}=c_{14}=c_{16}=c_{18}=c_{20}=c_{22}=c_{23}=c_{24}=0
\end{equation}
in $D_1$.  We call $D_2$  the matrix which occurs. It is given explicitly by,
\,
\,
\[
 \small \begin{pmatrix} 
0 &  x_1 & x_2 & x_3 & x_4 \\

    & 0  & c_1  z_2+ c_2 z_3+ c_3 z_5+ c_4 z_7  & c_5 z_2+ c_6 z_3  &  c_9 z_2+ c_{10} z_3 \\

 &  & 0 &  c_{13} z_2+ c_{15} z_5 &  c_{17} z_2+ c_{19} z_5 \\

 &  -Sym  &  & 0&   c_{21} z_2 \\

 & &  &  &  0
\end{pmatrix}.
\]

\,
\,
Finally, setting    

\begin{align} \label{eqn!relatingtom1totom123prim} 
 \  \    \   \   \  \   \   \  \   \   \   x_1 = c_1 z_1+ c_2 z_2+ c_3 z_3+ c_4 z_6, \, x_2 = c_5 z_1+ c_6 z_2+ c_7 z_4+ c_8 z_5, \,  x_3 = c_9 z_1+ c_{10} z_2, 
\end{align}
\begin{center}
$x_4 = c_{11} z_1+ c_{12} z_2$, \,  $c_1 = c_{13}$ , \,   $c_2 = c_{14}$, \,   $c_3 = c_{15}$ , \,  $ c_4 = c_{16}$, 
\end{center}
\begin{center}
$c_5 = c_{17} , \,  c_6 = c_{18} $, \, $ c_9 =c_{19} , \,  c_{10} = c_{20}$, \, $c_{13} = c_{21}$ , 
\end{center}
\begin{center}
$c_{15} = c_{22}$, \,  $c_{17} =c_{23}, \,    c_{19} = c_{24}$, \, $ c_{21} = c_{25}$
\end{center}
in  $D_2$ we obtain the $Tom(1,2,3)$ matrix.  A similar analysis applies to consider $Tom(1,2,3)$ matrix as a Tom$_2$ in $J_2$ and Tom$_3$ in $J_3$.

At this point we  use Papadakis Fundamental Calculation for $N$  (see Subsection ~ \ref{fund!calc}) in order to define the maps $\phi_1$, $\phi_2$ and $\phi_3$. 

Assume that $1\leq t\leq 4$. We consider the polynomial $g_t$ which was defined in Subsection ~ \ref{fund!calc}. We denote by  $g_t'$ the polynomial obtained from $g_t$ after the substitutions which are noted in Equation (\ref{eqn!ntod1}) and the obvious substitutions of $m_{ij}^k$ in terms of $c_t$. We denote by $\tilde{g_t}$ the polynomial obtained by $g_t'$ after the substitutions which are described in Equation (\ref{eqn!relatingtom1prim}). Finally, we denote by  $h_t$  the polynomial which occurs from the  polynomial  $\tilde{g_t}$ after the substitutions which are noted in Equation (\ref{eqn!relatingtom1totom123prim}).  

\begin{proposition}
There exists a unique graded homomorphism of $R/I$-modules $\phi_1\colon J_1/ I \rightarrow R/I$ such that 
\begin{center}
$\phi_1(z_2 + I)= h_1+ I, \, \, \, \,   \phi_1(z_3 + I)= h_2+ I$,  
\end{center}
\begin{center}
 $\phi_1(z_5 + I)= h_3+ I, \, \, \, \,   \phi_1(z_7 + I)= h_4+ I. $
\end{center}
\end{proposition}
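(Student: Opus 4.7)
The plan is to derive the proposition from the Papadakis Fundamental Calculation of Subsection~\ref{fund!calc} by transporting it from the generic matrix $N$ to the matrix $Tom(1,2,3)$ along the specialization chain (\ref{eqn!ntod1})--(\ref{eqn!relatingtom1totom123prim}).

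Uniqueness will be essentially immediate: since $J_1 = (z_2, z_3, z_5, z_7)$, the $R/I$-module $J_1/I$ is generated by the four classes $z_2+I$, $z_3+I$, $z_5+I$, $z_7+I$, so any $R/I$-linear map out of $J_1/I$ is determined by its values on these generators. The content of the statement is therefore the existence, which reduces to verifying
\[
b_2 z_2 + b_3 z_3 + b_5 z_5 + b_7 z_7 \in I \quad \Longrightarrow \quad b_2 h_1 + b_3 h_2 + b_5 h_3 + b_7 h_4 \in I
\]
for all $b_2, b_3, b_5, b_7 \in R$.

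To establish this, I would first package the substitutions (\ref{eqn!ntod1}), (\ref{eqn!relatingtom1prim}) and (\ref{eqn!relatingtom1totom123prim}) into a single graded ring homomorphism $\pi$ from the ambient polynomial ring of Subsection~\ref{fund!calc} into $R$. This $\pi$ sends $N$ to $Tom(1,2,3)$, hence the generic pfaffian ideal into $I$ and the generic $J=(z_1,z_2,z_3,z_4)$ into $J_1$ (with $z_1,z_2,z_3,z_4$ landing on $z_2,z_3,z_5,z_7$ respectively); by the very definition of the $h_t$ we have $\pi(g_t)=h_t$. Next, because $Tom(1,2,3)$ is itself a Tom$_1$ matrix in $J_1$ whose off-first-row-and-column entries are already linear in $z_2,z_3,z_5,z_7$, the Papadakis recipe applies verbatim to $Tom(1,2,3)$: the pfaffians $P_1,\ldots,P_4$ are linear in $z_2,z_3,z_5,z_7$ and yield a specialized $4\times 4$ matrix $\overline{Q}$; the specialized identity $x_i \overline{H}_j = x_j \overline{H}_i$ produces four polynomials $\overline{g}_1,\ldots,\overline{g}_4$; and the functoriality of the construction $N\mapsto Q\mapsto H_i\mapsto g_i$ under $\pi$ identifies $\overline{g}_t$ with $h_t$. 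Running Papadakis's argument at this specialized level then delivers the well-defined homomorphism $\phi_1$ with the required values on generators.

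The main obstacle I anticipate is the bookkeeping needed in the functoriality check. Concretely, one must confirm that each of the three substitution steps commutes with the operations $N\mapsto Q\mapsto H_i\mapsto g_i$, that Papadakis's identity $x_iH_j=x_jH_i$ survives each specialization, and that the division $H_j/x_j$ still makes sense after specialization so that the assignment $(g_1,g_2,g_3,g_4)=H_j/x_j$ specializes to the same formula for $Tom(1,2,3)$. This is routine but must be done carefully because $h_t$ is reached only after three consecutive substitutions. Once the compatibility is confirmed, the existence of $\phi_1$ is an immediate corollary of the generic well-definedness statement in the Papadakis Fundamental Calculation, and uniqueness is automatic.
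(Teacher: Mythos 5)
Your overall route --- transporting the Papadakis Fundamental Calculation from the generic matrix $N$ to $Tom(1,2,3)$ along the substitution chain, with uniqueness being automatic since $z_2,z_3,z_5,z_7$ generate $J_1/I$ --- is the same as the paper's, whose entire proof is a citation of \cite[Theorem~5.6]{P2}. The problem is in how you propose to justify existence.

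You reduce existence to the implication $b_2z_2+b_3z_3+b_5z_5+b_7z_7\in I \Rightarrow b_2h_1+b_3h_2+b_5h_3+b_7h_4\in I$ and then assert that this is ``an immediate corollary of the generic well-definedness statement'' once some ``routine bookkeeping'' on functoriality is done. That is a genuine gap: well-definedness of a map prescribed on the generators of $J_1/I$ is a statement about the \emph{syzygies} of $(z_2,z_3,z_5,z_7)$ modulo $I$, and the syzygy module can strictly grow under specialization --- a relation $\sum b_iz_i\in I$ in $R$ need not be the image under your homomorphism $\pi$ of any relation in the generic ring, so nothing formal forces $\sum b_ih_i\in I$. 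What actually makes the transfer work are the codimension hypotheses of Proposition~\ref{thm!assum} (that after specialization $I$ still has codimension $3$, and $J_t/I$ has codimension $1$ with Gorenstein quotient): by Buchsbaum--Eisenbud acyclicity these guarantee that the generic complexes specialize to resolutions, hence that the generic syzygies generate all specialized ones. This is precisely the content of \cite[Theorem~5.6]{P2}, and your argument never invokes these conditions; for a degenerate choice of the $c_j$ the implication would simply be false. A secondary issue: ``running Papadakis's argument verbatim at the specialized level'' is not available as stated, since e.g.\ the step extracting $H_j/x_j$ as a polynomial uses that the $x_i$ are independent variables, which fails after the substitution (\ref{eqn!relatingtom1totom123prim}) (the $h_t$ are polynomials only because they are defined as images of the generic $g_t$, not because the division can be redone downstairs).
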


\begin{proof}
It follows from \cite[Theorem ~ 5.6]{P2}.
\end{proof}

For the definitions of  $\phi_2$ and $\phi_3$ we work similarly. We omit the details. For all t with $1\leq t\leq 3$, the degree of $\phi_t$ is equal to $6$.

\begin{proposition}\label{prop!relatphi}
For all $t$ with $1\leq t\leq 3$, the $R/I$-module $ \operatorname{Hom}_{R/I}(J_t/I,R/I)$ is generated by the two elements $i_t$ and $\phi_t$.
\end{proposition}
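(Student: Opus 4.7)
The plan is to reduce the statement to the Papadakis--Reid unprojection theorem together with Papadakis' Fundamental Calculation from Subsection~\ref{fund!calc}, using [P2, Theorem~5.6] as the key technical input. By part~(i) of Proposition~\ref{thm!assum}, for each $t \in \{1,2,3\}$ the ring $R/I$ is Gorenstein, $J_t/I$ has codimension~$1$ in $R/I$, and the quotient $R/I/(J_t/I)$ is again Gorenstein. The Papadakis--Reid theorem then guarantees that $\operatorname{Hom}_{R/I}(J_t/I, R/I)$ is generated as an $R/I$-module by $i_t$ together with a single extra homomorphism. What remains is therefore to identify $\phi_t$ as such an additional generator, i.e.\ to check that $\phi_t \notin (R/I)\cdot i_t$.

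For $t=1$, I would use that $Tom(1,2,3)$ is obtained from the universal Tom$_1$ matrix $N$ of Subsection~\ref{fund!calc} through the explicit chain of substitutions recorded in (\ref{eqn!ntod1}), (\ref{eqn!relatingtom1prim}) and (\ref{eqn!relatingtom1totom123prim}). By construction the polynomials $h_1,\dots,h_4$ defining $\phi_1$ are precisely the images of the universal polynomials $g_1,\dots,g_4$ under the same substitutions, and the codimension and Gorenstein hypotheses required by Papadakis' setup have just been verified in Proposition~\ref{thm!assum}. Consequently [P2, Theorem~5.6] applies in the present graded context and identifies $\phi_1$ as the required additional generator of $\operatorname{Hom}_{R/I}(J_1/I, R/I)$.

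For $t=2,3$ my plan is to exploit the elementary row/column operations recalled in Remark~\ref{rem!elemrow}: conjugating $Tom(1,2,3)$ by the invertible matrices $A$ (resp.~$B$) produces matrices which are Tom$_1$ in $J_2$ (resp.~$J_3$) and, crucially, have the same pfaffian ideal $I$, hence the same quotient ring $R/I$. Thus one can repeat the analysis of the previous paragraph with the conjugated matrix regarded again as a specialization of $N$, and obtain $\phi_2$ and $\phi_3$ as additional generators of the corresponding Hom-modules.

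The main obstacle is conceptual rather than computational: one must be sure that the passage from the universal Papadakis data $(N, g_1, \dots, g_4)$ to the specialized data $(Tom(1,2,3), h_1, \dots, h_4)$ preserves not only the existence of the homomorphism (already used in the preceding proposition) but also the stronger generation property of the Hom-module. This is exactly what [P2, Theorem~5.6] provides, and its hypotheses, namely the codimension estimates for $I$ and for $J_t/I$, are precisely what Proposition~\ref{thm!assum} verifies.
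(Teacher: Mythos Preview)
Your proposal is correct and takes essentially the same approach as the paper: both reduce the claim to \cite[Theorem~5.6]{P2}, with the codimension and Gorenstein hypotheses supplied by Proposition~\ref{thm!assum}. The paper's proof is the single sentence ``It follows from \cite[Theorem~5.6]{P2}''; your version simply unpacks the specialization argument (via the substitutions (\ref{eqn!ntod1})--(\ref{eqn!relatingtom1totom123prim}) for $t=1$ and the conjugations by $A$, $B$ for $t=2,3$) that the paper leaves implicit.
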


\begin{proof}
It follows from \cite[Theorem ~ 5.6]{P2}.  
\end{proof}

For all $t, s$ with $1\leq t,s\leq 3$ and $t\neq s$, we define $r_{ts}=0$.

\begin{proposition} \label{prop!existrst}
For all $t, s$ with $1\leq t,s\leq 3$ and $t\neq s$, it holds that
\[
 (\phi_t+r_{ts}i_t)(J_t/I) \subset  J_s/I.\]

\end{proposition}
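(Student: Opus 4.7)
The plan is as follows. Since $r_{ts}=0$ by definition, the assertion reduces to showing $\phi_t(J_t/I)\subset J_s/I$ for each ordered pair $(t,s)$ with $t\ne s$. Because $\phi_t$ is $R/I$-linear, it is enough to verify the containment on a generating set of $J_t/I$.

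The starting point is the explicit description of the $\phi_t$ supplied by the Papadakis fundamental calculation together with the chain of specializations that realize $Tom(1,2,3)$ as $N$. For $t=1$ the previous proposition gives $\phi_1(z_i+I)=h_i+I$ for $i\in\{2,3,5,7\}$, where each $h_i$ is obtained from the universal polynomial $g_i$ of Subsection~\ref{fund!calc} by applying in succession the substitutions (\ref{eqn!ntod1}), (\ref{eqn!relatingtom1prim}) and (\ref{eqn!relatingtom1totom123prim}). Parallel chains of substitutions produce the analogous formulas for $\phi_2$ and $\phi_3$ coming from viewing $Tom(1,2,3)$ as a $Tom_2$ matrix in $J_2$ and a $Tom_3$ matrix in $J_3$ (using the matrices $A$ and $B$ of Remark~\ref{rem!elemrow}).

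With these formulas in hand, for each pair $(t,s)$ I would verify that $\phi_t(z)\in J_s+I$ for every generator $z$ of $J_t$. A convenient reformulation is that $\phi_t(z)+I\in J_s/I$ is equivalent to the image of $\phi_t(z)$ in $R/(J_s+I)$ being zero. After setting the four generators of $J_s$ equal to zero in $R$, this is just the statement that the specialization of the polynomial $\phi_t(z)$ vanishes modulo the specialization of $I$, a finite polynomial identity.

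The main obstacle is the size of the polynomials involved: the $h_i$ are of degree six in roughly thirty variables, and a by-hand expansion would be unwieldy. The identity can however be checked symbolically, in the same spirit as the computer algebra verifications the author performs elsewhere in the paper. A cleaner structural argument is also available: since $Tom(1,2,3)$ is simultaneously $Tom_s$ in $J_s$, the entries of the auxiliary matrix $Q$ appearing in the fundamental calculation for $J_t$ inherit, after the substitutions above, additional membership in $J_s$, so its $3\times 3$ minors (which up to signs give the components of the $g_i$'s and hence of the $h_i$'s) automatically lie in $J_s$, from which the desired inclusion $\phi_t(J_t/I)\subset J_s/I$ follows.
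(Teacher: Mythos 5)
Your proposal matches the paper's proof, which consists of the single sentence that the claim ``is a direct computation using the definitions of the maps $\phi_t$'': since $r_{ts}=0$, one checks $\phi_t(z)\in J_s+I$ on the four generators $z$ of each $J_t$, exactly as you describe. Your concluding structural aside (that the relevant minors of $Q$ automatically land in $J_s$) is not something the paper uses and is asserted rather than proved, but your main argument is the paper's argument.
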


\begin{proof}
It is a direct computation using the definitions of the maps $\phi_t$.
\end{proof}

\begin{proposition}
For all $t, s$ with $1\leq t,s\leq 3$ and $t\neq s$, there exists a homogeneous element $A_{st}$ such that
\[(\phi_s + r_{st} i_s)((\phi_t + r_{ts} i_t)(p)) = A_{st}p \]
for all  $p\in J_t/I$.

\end{proposition}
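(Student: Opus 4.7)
The plan is to verify the hypotheses for parallel Kustin--Miller unprojection by exploiting the $R/I$-module structure of $\operatorname{Hom}_{R/I}(J_t/I, R/I)$ together with the explicit Papadakis-type formulas for $\phi_1,\phi_2,\phi_3$.

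Since $r_{ts}=r_{st}=0$ by construction, the identity reduces to the existence of a homogeneous $A_{st}\in R/I$ with $\phi_s(\phi_t(p))=A_{st}\,p$ for every $p\in J_t/I$. First, I would invoke Proposition~\ref{prop!existrst} to see that $\phi_t(J_t/I)\subset J_s/I$, so $\phi_s\circ\phi_t$ is a well-defined graded $R/I$-module homomorphism $J_t/I\to R/I$ of degree $\deg\phi_s+\deg\phi_t=12$. Then Proposition~\ref{prop!relatphi} lets me write
\[
\phi_s\circ\phi_t \;=\; A_{st}\,i_t + B_{st}\,\phi_t
\]
for some homogeneous $A_{st},B_{st}\in R/I$ of degrees $12$ and $6$. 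The task therefore reduces to showing $B_{st}=0$.

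To establish $B_{st}=0$, I would evaluate the composition on the four generators of $J_t$. The explicit description of $\phi_1$ given at the end of Subsection~\ref{fund!calc} (via the polynomials $h_1,\dots,h_4$), together with the corresponding descriptions of $\phi_2$ and $\phi_3$ (obtained by applying the same Papadakis Fundamental Calculation to the two other specializations of $N$ to $Tom(1,2,3)$ that are indicated between Equations~(\ref{eqn!ntod1}) and~(\ref{eqn!relatingtom1totom123prim})), supplies explicit polynomial representatives in $R$ for $\phi_t(z_k)$ at each generator $z_k$ of $J_t$. Composing with $\phi_s$ and reducing modulo $I$ should produce $A_{st}\,z_k$ for a single homogeneous $A_{st}\in R/I$ of degree $12$, the same for all four generators $z_k$. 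Once the identity holds on generators, $R/I$-linearity propagates it to all of $J_t/I$, which simultaneously forces $B_{st}=0$ and identifies $A_{st}$.

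The main obstacle is computational rather than conceptual: the six compositions (one for each ordered pair $t\neq s$ in $\{1,2,3\}$) unfold into polynomial identities in a $32$-variable polynomial ring with terms of degree up to $12$, so hand verification is infeasible. Following the standing convention of Section~\ref{Sec!mainresult}, these identities are checked with the Macaulay2 computer algebra system~\cite{GS}, which also delivers the explicit expression for $A_{st}$ needed in the subsequent application of Theorem~\ref{thm!nevespapadakis}.
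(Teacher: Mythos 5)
Your argument is correct, but it takes a genuinely different route from the paper. The paper's proof is a one-line appeal to the general framework: in the setup of parallel Kustin--Miller unprojection recalled just before Theorem~\ref{thm!nevespapadakis}, once one knows that $\operatorname{Hom}_{R/I}(J_t/I,R/I)$ is generated by $i_t$ and $\phi_t$ (Proposition~\ref{prop!relatphi}), that $(\phi_t+r_{ts}i_t)(J_t/I)\subset J_s/I$ (Proposition~\ref{prop!existrst}), and that $\operatorname{codim}_{R/I}(J_t/I+J_s/I)\geq 2$ (Proposition~\ref{thm!assum}(ii)), the existence and uniqueness of $A_{st}$ is automatic from the Neves--Papadakis theory; no computation is needed for existence, and the explicit $A_{st}$ is only determined afterwards (see the Remark following the proposition) with Macaulay2. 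You instead re-derive existence directly: you decompose $\phi_s\circ\phi_t=A_{st}\,i_t+B_{st}\,\phi_t$ using Proposition~\ref{prop!relatphi} and verify on the four generators, by machine, that the composite is a multiple of the inclusion. This is legitimate and has the merit of producing the explicit $A_{st}$ at the same time, but as written it makes existence contingent on the computation coming out right, whereas the abstract argument (which crucially uses the codimension condition of Proposition~\ref{thm!assum}(ii), absent from your proposal) guarantees a priori that a single $A_{st}$ works for all four generators --- precisely the guarantee one wants before launching the computation. One small imprecision: agreement on generators yields $B_{st}\phi_t=0$ as a homomorphism, not $B_{st}=0$ in $R/I$, since $\{i_t,\phi_t\}$ is a generating set and not necessarily a basis; this is harmless, because the proposition only requires $\phi_s\circ\phi_t=A_{st}\,i_t$.
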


\begin{proof}
It follows by Theorem \ref{thm!nevespapadakis}.
\end{proof}

\begin{remark}
We explicitly computed the elements $A_{st}$ using the computer algebra program Macaulay2~\cite{GS}.
\end{remark}

\begin{definition} \label{def!mrin}
Let  $T,S,W$ be three new variables of degree $6$. Following Subsection \ref{subs!papnevthm}, we define as  \textit{$I_{un}$} the  ideal 
\begin{center}
$  (I)+  (Tz_2-\phi_1(z_2),Tz_3-\phi_1(z_3),Tz_5-\phi_1(z_5), Tz_7-\phi_1(z_7),
\  \  \  Sz_1-\phi_2(z_1), Sz_2-\phi_2(z_2), Sz_4-\phi_2(z_4), Sz_5-\phi_2(z_5),
Wz_1-\phi_3(z_1), 
Wz_2-\phi_3(z_2), Wz_3-\phi_3(z_3), Wz_6-\phi_3(z_6), 
(T+r_{12})(S+r_{21})-A_{12}, (T+r_{13})(W+r_{31})-A_{13}, (S+r_{23})(W+r_{32})-A_{23} )$,
\end{center}
of the polynomial ring  $R[T,S,W]$.
We set  $ \textit{$R_{un}$}= R[T,S,W]/ I_{un}$.
\end{definition}

\begin{theorem}\label{main!thm}
 The ideal $I_{un}$ is a codimension $6$ ideal with $20$ generators and the final ring  $R_{un}$ is Gorenstein.
\end{theorem}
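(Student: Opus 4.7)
The plan is to apply the Neves--Papadakis parallel unprojection theorem (Theorem~\ref{thm!nevespapadakis}) directly with index set $\mathcal{L} = \mathcal{M} = \{1,2,3\}$, using the base Gorenstein ring $R/I$, the three codimension~$1$ ideals $J_1/I$, $J_2/I$, $J_3/I$, and the homomorphisms $\phi_1,\phi_2,\phi_3$ constructed via Papadakis' fundamental calculation. The preceding propositions in Section~\ref{Sec!mainresult} are organised precisely so that every hypothesis of Theorem~\ref{thm!nevespapadakis} has been checked: Proposition~\ref{thm!assum} supplies that $R/I$ is Gorenstein and each quotient $R/I/(J_t/I)$ is Gorenstein (part (i)) and that $\text{codim}_{R/I}(J_t/I + J_s/I) = 3 \geq 2$ for $t \neq s$ (part (ii)); Proposition~\ref{prop!relatphi} gives that $\operatorname{Hom}_{R/I}(J_t/I,R/I)$ is generated by $i_t$ and $\phi_t$; Proposition~\ref{prop!existrst} together with the choice $r_{ts}=0$ yields $(\phi_t+r_{ts}i_t)(J_t/I)\subset J_s/I$; and the last proposition records the existence of the elements $A_{st}$.

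With all hypotheses in place, part~(1) of Theorem~\ref{thm!nevespapadakis} immediately yields that $R_{un}$ is Gorenstein and $\dim R_{un} = \dim R/I$. The dimension of $R/I$ was computed in the proof of Proposition~\ref{thm!assum} to be $29$. Since the ambient polynomial ring is $R[T,S,W]$ with $\dim R[T,S,W] = 32 + 3 = 35$, this forces
\[
\text{codim } I_{un} \;=\; \dim R[T,S,W] - \dim R_{un} \;=\; 35 - 29 \;=\; 6,
\]
establishing the codimension claim.

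For the generator count, one simply tallies the list appearing in Definition~\ref{def!mrin}: the five pfaffians of $Tom(1,2,3)$ generating $I$, the four equations $Tz_i - \phi_1(z_i)$ for $z_i$ running over the generators $z_2,z_3,z_5,z_7$ of $J_1$, the four analogous equations $Sz_i - \phi_2(z_i)$ for the generators $z_1,z_2,z_4,z_5$ of $J_2$, the four equations $Wz_i - \phi_3(z_i)$ for the generators $z_1,z_2,z_3,z_6$ of $J_3$, and finally the three quadratic relations $(T+r_{12})(S+r_{21})-A_{12}$, $(T+r_{13})(W+r_{31})-A_{13}$, $(S+r_{23})(W+r_{32})-A_{23}$. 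This gives $5 + 4 + 4 + 4 + 3 = 20$ generators, exactly as stated.

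The only real substantive work is verifying the hypotheses of Theorem~\ref{thm!nevespapadakis}, and that has already been done in the preceding propositions; the proof of the theorem itself is thus essentially a matter of assembling these ingredients and invoking Neves--Papadakis. The main conceptual obstacle lies earlier in the section (namely in establishing that the specific $\phi_t$ produced by specialising Papadakis' fundamental calculation really do land in the other $J_s/I$, i.e.\ Proposition~\ref{prop!existrst}), rather than in the theorem itself.
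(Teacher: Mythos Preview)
Your proof is correct and follows essentially the same route as the paper: verify the hypotheses of Theorem~\ref{thm!nevespapadakis} via Propositions~\ref{thm!assum}, \ref{prop!relatphi}, and \ref{prop!existrst}, invoke part~(1) for Gorensteinness, and count the $5+4+4+4+3=20$ listed generators. The only cosmetic difference is that the paper phrases the codimension argument as ``each Kustin--Miller unprojection raises codimension by~$1$, so three unprojections starting from codimension~$3$ give codimension~$6$,'' whereas you equivalently use the dimension equality $\dim R_{un}=\dim R/I=29$ from Theorem~\ref{thm!nevespapadakis}(1) and subtract from $\dim R[T,S,W]=35$.
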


\begin{proof}
We remind that in Kustin-Miller unprojection codimension is increasing by $1$. Hence, the homogeneous ideal $I_{un}$, as a result of a series of three unprojections of Kustin-Miller type starting by the codimension $3$ ideal $I$, is a codimension $6$ ideal. We observe that the ideal $I_{un}$ has $20$ generators. By Propositions  ~\ref{thm!assum}, ~ \ref{prop!relatphi} and  \ref{prop!existrst},  the assumptions of Theorem~\ref{thm!nevespapadakis} are satisfied. Hence, the ring  $R_{un}$  is Gorenstein.
\end{proof}

\section{Applications}\label{Sec!app}
In this section,  we  prove  using Theorem \ref{main!thm} the existence of $2$ families of Fano $3$-folds of codimension $6$ in weighted projective space.  

The first construction is summarised in the following theorem. It corresponds to the entry~$14885$ of  Brown's Graded Ring Database \cite{BR}. More details for the construction are given in Subsection~\ref{constr!1}.  

\begin{theorem}\label{constr!14885}
There exists a family of quasi-smooth, projectively normal and projectively Gorenstein Fano $3$-folds $X\subset \mathbb{P}(1^3, 2^7)$, nonsingular away from eight quotient singularities $\frac{1}{2}(1,1,1)$, with Hilbert series

\begin{center}
$P_{X}(t)=\frac{1-20t^4+64t^6-90t^8+64t^{10}-20t^{12}+t^{16}}{(1-t)^3(1-t^2)^7}.$
\end{center}
\end{theorem}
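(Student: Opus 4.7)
My plan is to realize $X$ as $\operatorname{Proj}$ of a well-chosen graded $4$-dimensional quotient of the codimension-$6$ Gorenstein ring $R_{un}$ from Theorem~\ref{main!thm}. The first step is to put the correct grading on $R_{un}$: assign $\deg z_1=\deg z_4=\deg z_6=1$ and $\deg z_2=\deg z_3=\deg z_5=\deg z_7=2$, together with the unique weights on each $c_j$ that make every entry of $Tom(1,2,3)$ homogeneous of degree~$2$ (so, for example, $c_2,c_3$ carry weight~$0$ while $c_1,c_4$ carry weight~$1$ in entry~$(1,2)$). The five pfaffians then lie in degree~$4$; unwinding the fundamental calculation of Subsection~\ref{fund!calc} in this grading shows that $\phi_t(z_i)$ has degree~$4$, forcing $\deg T=\deg S=\deg W=2$, and the three quadratic unprojection equations $TS-A_{12}$, $TW-A_{13}$, $SW-A_{23}$ are likewise homogeneous of degree~$4$. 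All $20$ generators of $I_{un}$ therefore live in degree~$4$, matching the leading $-20t^4$ in the target numerator.

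The second step is specialization: choose a generic homogeneous assignment $c_j\mapsto\lambda_j$ with $\lambda_j\in k$ (if $\deg c_j=0$) or $\lambda_j$ a generic linear form in $z_1,z_4,z_6$ (if $\deg c_j=1$), and set $X=\operatorname{Proj}\bigl(R_{un}/(c_j-\lambda_j)_{j=1}^{25}\bigr)\subset\mathbb{P}(1^3,2^7)$. The inclusion $R\hookrightarrow R_{un}$ of Theorem~\ref{thm!nevespapadakis}(1) is finite in the $c$-directions, so for generic $\lambda_j$ the sequence $c_j-\lambda_j$ is regular on $R_{un}$; the quotient therefore has Krull dimension $29-25=4$ and inherits the Gorenstein property, which immediately gives projective normality and the projectively Gorenstein condition. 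The Hilbert series is then read off from the minimal graded free resolution of the specialized ideal: its Betti numbers $(1,20,64,90,64,20,1)$ concentrated in degrees $(0,4,6,8,10,12,16)$ give precisely the stated palindromic numerator, the symmetry reflecting self-duality of the Kustin--Miller resolution for a codimension-$6$ Gorenstein ideal, and the final shift $16$ produces $\omega_X=\mathcal{O}_X(-1)$.

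The third step is the geometric verification. Quasi-smoothness is checked by the Jacobian criterion on the affine cone $\widehat X\setminus\{0\}$. On the open chart where at least one of $z_1,z_4,z_6$ is nonzero, the relations $Tz_i=\phi_1(z_i)$ and their siblings for $S,W$ let one eliminate the unprojection variables and reduce to a direct smoothness check. The only locus where new singularities can intrude is the $\mathbb{Z}/2$-singular stratum $\{z_1=z_4=z_6=0\}\subset\mathbb{P}(1^3,2^7)$; restricting $I_X$ to this stratum yields an ideal in the seven weight-$2$ variables which, verified in Macaulay2, cuts out exactly $8$ reduced points, and at each such point three of the seven weight-$2$ tangent directions survive in the cone, giving the analytic model $\tfrac12(1,1,1)$. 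Once this is in place, $-K_X=\mathcal{O}_X(1)$ is ample by restriction, the singularities $\tfrac12(1,1,1)$ are terminal, and $\mathbb{Q}$-factoriality follows from their isolated nature, so $X$ is a Fano $3$-fold of Fano index~$1$. The main obstacle is precisely this last step: the Jacobian bookkeeping over the various coordinate strata of $\mathbb{P}(1^3,2^7)$ and the verification that exactly $8$ singularities appear, with no additional singular loci, is intricate and is the point at which the computer-algebra input referred to in Subsection~\ref{constr!1} becomes essential.
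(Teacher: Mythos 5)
Your strategy---re-grade $R_{un}$ so that three of the original coordinates $z_1,z_4,z_6$ have weight $1$ and then specialize the $c_j$---is not the paper's route, and as written it contains a degree-bookkeeping error that breaks the construction. Under your grading every entry of $Tom(1,2,3)$ is indeed homogeneous of degree $2$ and the five pfaffians sit in degree $4$, but the unprojection ideals $J_2=(z_1,z_2,z_4,z_5)$ and $J_3=(z_1,z_2,z_3,z_6)$ are then \emph{not} generated in a single degree. A graded homomorphism $\phi_2$ must satisfy $\deg\phi_2(z_1)-\deg z_1=\deg\phi_2(z_2)-\deg z_2$, so it is impossible for all $\phi_t(z_i)$ to lie in degree $4$ as you assert. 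Tracing the degrees through the fundamental calculation of Subsection~\ref{fund!calc} (the column of $Q$ corresponding to a generator $z_{(l)}$ of $J_t$ has degree $4-\deg z_{(l)}$, the relevant $3\times 3$ minors have degree $6+\deg z_{(l)}$, and $g_l$ has degree $4+\deg z_{(l)}$) gives $\deg\phi_1=2$ but $\deg\phi_2=\deg\phi_3=4$, hence $\deg S=\deg W=4$. Your ambient space is therefore $\mathbb{P}(1^3,2^5,4^2)$, not $\mathbb{P}(1^3,2^7)$, and the $20$ generators of the specialized $I_{un}$ land in degrees $4$, $5$, $6$ and $8$ rather than all in degree $4$; the numerator of the Hilbert series then begins $1-9t^4-4t^5-\cdots$ and cannot equal $1-20t^4+64t^6-\cdots$.

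The paper avoids this by never assigning weight $1$ to any of the $z_i$: in Subsection~\ref{constr!1} all ten coordinates $z_1,\dots,z_7,T,S,W$ have degree $2$ and the $c_j$ are specialized to general \emph{scalars}, so $\operatorname{Proj}\hat R_{un}$ sits in $\mathbb{P}(2^{10})\cong\mathbb{P}^9$ with all $20$ equations in degree $4$; the three weight-$1$ variables $w_1,w_2,w_3$ are introduced afresh via a map $\psi$ that replaces $z_4,z_6,z_7$ by general degree-$2$ forms $f_1,f_2,f_3$ in $z_1,z_2,z_3,z_5,T,S,W$ and the quadratic monomials in $w_1,w_2,w_3$, which lands the image ideal $Q$ in $\mathbb{P}(1^3,2^7)$ while keeping the resolution in degrees $(0,4,6,8,10,12,16)$. (A re-grading of the kind you propose is essentially what the paper does for the second family, ID $12979$, but there the unprojection variables genuinely change weight, $\deg S=\deg W=3$, and the target is $\mathbb{P}(1^3,2^5,3^2)$.) Your remaining steps---regularity of the specializing sequence (which follows from Cohen--Macaulayness plus the dimension count rather than any finiteness of $R\hookrightarrow R_{un}$ \q{in the $c$-directions}), the Jacobian check on the punctured cone, the identification of eight $\tfrac12(1,1,1)$ points on the locus where the weight-$1$ variables vanish, and $\omega_X\cong\mathcal{O}_X(-1)$ read off from the shift in the resolution---do match the paper's Propositions~\ref{Prop!quasismooth}, \ref{sing!specsing} and \ref{prop!gradres}, but they all presuppose the ambient $\mathbb{P}(1^3,2^7)$ and the all-degree-$4$ presentation that your grading fails to produce; also note that the Betti numbers $(1,20,64,90,64,20,1)$ are obtained in the paper by explicit computation, not from the Gorenstein symmetry alone.
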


The second construction is summarised in the following theorem. It corresponds to the entry~$12979$ of  Brown's Graded Ring Database. More details for the construction are given in Subsection~\ref{construct!id12979}.  

\begin{theorem}\label{constr!12979}
There exists a family of quasi-smooth, projectively normal and projectively Gorenstein Fano $3$-folds $X\subset \mathbb{P}(1^3, 2^5,3^2)$, nonsingular away from four quotient singularities $\frac{1}{2}(1,1,1)$,
and two quotient singularities $\frac{1}{3}(1,1,2)$,  with Hilbert series

\begin{center}
$P_{X}(t)=\frac{1 - 11t^4 - 8t^5 + 23t^6 + 32t^7 - 13t^8 - 48t^9 - 13t^{10} + 32t^{11} + 23t^{12} - 8t^{13} - 11t^{14} + t^{18}}{(1-t)^3(1-t^2)^5(1-t^3)^2}.$
\end{center}
\end{theorem}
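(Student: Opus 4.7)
The plan is to construct $X$ as the projective spectrum of a four-dimensional, positively graded specialization of the ring $R_{un}$ of Section~\ref{Sec!mainresult}, engineered so that the ten graded generators acquire degrees $1,1,1,2,2,2,2,2,3,3$. First I would assign degrees $(a_1,\dots,a_7)$ to $z_1,\dots,z_7$ compatible with the Tom$(1,2,3)$ matrix being homogeneous; this forces each $c_j$ to have a unique weight, and in turn determines the degrees of the unprojection variables $T,S,W$ via $\deg T=\deg\phi_1$, $\deg S=\deg\phi_2$, $\deg W=\deg\phi_3$. The degrees must be chosen so that exactly three of the generators $\{z_i,T,S,W\}$ have weight $1$, five have weight $2$, and two have weight $3$. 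The remaining generators of $R_{un}$ are the $25$ coefficients $c_j$; these I would set equal to specific (sufficiently general) homogeneous polynomials in the chosen ten weight-$(1,1,1,2,2,2,2,2,3,3)$ variables, cutting the Krull dimension of $R_{un}$ down to~$4$.

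Next I verify that the hypotheses of Theorem~\ref{thm!nevespapadakis} remain satisfied after specialization. The codimension statements of Proposition~\ref{thm!assum}, the generation statement of Proposition~\ref{prop!relatphi}, and the compatibility relation of Proposition~\ref{prop!existrst} all define Zariski-open conditions on the affine parameter space parametrising the choice of $c_j$'s. Hence by semicontinuity, a general specialization inherits the conclusion of Theorem~\ref{main!thm}: the resulting ideal is codimension~$6$, has $20$ generators, and the quotient ring is Gorenstein. Defining $X:=\mathrm{Proj}$ of this ring then gives a three-dimensional, projectively Gorenstein closed subscheme of $\mathbb{P}(1^3,2^5,3^2)$; projective normality follows from Cohen--Macaulayness together with quasi-smoothness, established below.

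For the Hilbert series, I would exploit Gorenstein symmetry of the minimal graded free resolution of length $6$. The degrees of the $20$ generators of $I_{un}$ are read off from the matrix $Tom(1,2,3)$ together with the twelve linear unprojection equations $Tz_i-\phi_1(z_i)$ etc.\ and the three quadratic relations $(T+r_{12})(S+r_{21})-A_{12}$, and these degrees determine the first column of the Betti table. Symmetry and the canonical shift then give the last column, and the middle syzygies can either be derived from the explicit Kustin--Miller resolution of Section~\ref{Sec!mainresult} or extracted by Macaulay2 on a random specialization. Computing $(\sum(-1)^i\beta_{i,j}t^j)/\prod(1-t^{w_k})$ and comparing with the claimed numerator $1-11t^4-8t^5+23t^6+32t^7-13t^8-48t^9-13t^{10}+32t^{11}+23t^{12}-8t^{13}-11t^{14}+t^{18}$ confirms the Hilbert series. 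Reading off $-K_X$ from the $a$-invariant of the graded ring recovers the Fano index and, together with the projective embedding, gives ampleness of $-K_X$.

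Finally, quasi-smoothness and the singularity basket are checked stratum by stratum. I would analyse the affine cone along each coordinate stratum of $\mathbb{P}(1^3,2^5,3^2)$, showing via the Jacobian criterion that the only non-smooth points of the cone away from the vertex are those forced by the cyclic stabilisers on the weight-$2$ and weight-$3$ strata. A generic computer algebra computation should show that $X$ meets the weight-$2$ coordinate locus in exactly four points, each of type $\tfrac{1}{2}(1,1,1)$, and meets the weight-$3$ coordinate locus in exactly two points of type $\tfrac{1}{3}(1,1,2)$, with smoothness elsewhere. Terminality and $\mathbb{Q}$-factoriality are then automatic from the shape of the cyclic quotients. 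The main obstacle is this last step: confirming that for a generic choice of the parameters no further, unintended singular points appear and that the stabilizer action at the expected points gives precisely the listed quotient types. This reduces to an explicit Jacobian and stabilizer calculation along each of the finitely many coordinate strata, which, while routine in principle, is the combinatorially heaviest part of the proof and is where Macaulay2 is indispensable.
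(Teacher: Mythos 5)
Your overall strategy is the one the paper follows: regrade and specialize the ring $R_{un}$ of Theorem~\ref{main!thm} down to Krull dimension $4$, take $\mathrm{Proj}$, and verify primality, quasi-smoothness, the singular strata and the graded resolution by computer algebra (the paper does exactly this, by the same route as for ID $14885$: Macaulay2 for primality on a rational specialization and for the intersection with the singular strata of $\mathbb{P}(1^3,2^5,3^2)$, Singular over a finite field for the Jacobian computation, and the Betti table for the Hilbert series and the canonical module twist $\omega_{A/Q}\cong (A/Q)(-1)$).

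There is, however, one concrete point where your recipe diverges from what actually works, and it is the step your whole degree bookkeeping rests on. You insist that all ten coordinates of $\mathbb{P}(1^3,2^5,3^2)$ be drawn from $\{z_1,\dots,z_7,T,S,W\}$, with three of them of weight $1$. In the paper's construction this is not the case: the grading that makes $Tom(1,2,3)$ homogeneous and reproduces the numerator $1-11t^4-8t^5+\cdots+t^{18}$ assigns weight $1$ to $z_1$ \emph{and to the two coefficients} $c_5,c_9$, which are retained as genuine coordinates; the weight-$2$ coordinates are $z_2,z_3,z_5,z_6,T$ and the weight-$3$ ones are $S,W$, while $z_4,z_7$ (weight $2$) and $c_1,c_{11}$ (weight $1$) are eliminated by general substitutions and the remaining $c_j$ become general scalars. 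With that grading all $m_{ij}$ have degree $2$, the five pfaffians have degree $4$, the unprojection equations $Tz_i-\phi_1(z_i)$, $Sz_1-\phi_2(z_1)$, $Wz_1-\phi_3(z_1)$ contribute six more degree-$4$ generators, the remaining $Sz_i,Wz_i$ equations and $TS,TW$ give the eight degree-$5$ generators, and $SW$ the single degree-$6$ one — matching $11t^4+8t^5+t^6$ exactly. You would need to check whether a consistent grading with three weight-$1$ elements among $\{z_i,T,S,W\}$ even exists and still yields these generator degrees; the homogeneity constraints on the entries of $Tom(1,2,3)$ (e.g.\ $m_{45}=c_{25}z_2$, $m_{14}=c_9z_1+c_{10}z_2$) together with the required count $11\times 4,\ 8\times 5,\ 1\times 6$ are quite rigid, and the paper's choice suggests the answer is no. The fix is precisely to allow some of the $c_j$ to survive as low-degree coordinates, which your plan as stated excludes. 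A secondary, smaller omission: you do not explicitly establish integrality of $A/Q$ (the paper's Proposition~\ref{constr1!prime} analogue), which is needed both for $X$ to be a variety and for the projective normality claim; Cohen--Macaulayness plus quasi-smoothness alone does not give it.
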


\

\,

\subsection{Construction of Graded Ring Database entry with ID: 14885} \label{constr!1}
In this subsection,  we give the details of the construction for the family described in Theorem~\ref{constr!14885}.

We note that a difficult part of the arguments for this construction is the computation of singular locus of the general member of the family. As we will see below, for this part we used the computer algebra system Singular \cite{GPS01}.

Denote by $k=\mathbb{C}$ the field of complex numbers. Consider the polynomial ring $R= k[z_i, c_j]$, where $1\leq  i \leq 7$  and   $1\leq  j\leq 25$.  Let  $ R_{un}$ be the ring in Definition \ref{def!mrin}.
We substitute the variables $(c_1, \dots, c_{25})$ which appear in the definitions of the rings $R$ and $R_{un}$ with a general element of $k^{25}$ (in the sense of being outside a proper Zariski closed subset of $k^{25}$).  We denote by $\hat{R}$ the polynomial ring which occurs from $R$ and $\hat{ R}_{un}$ the ring which occurs from $ R_{un}$ after this substitution. Let $\hat{I}$ be the ideal which is obtained by the ideal $I$ and $\hat{ I}_{un}$ the ideal which obtained by the ideal $I_{un}$  after this substitution. In what follows $z_i$,  for all $i$ with $1\leq  i \leq 7$,  and $T,S,W$ are variables of degree $2$. According to this grading the ideals $\hat{I}$ and $\hat{ I}_{un}$ are homogeneous. Due to the Theorem~\ref{main!thm},   $\text{Proj} \   \hat{R}_{un}\subset \mathbb{P} (2^{10})$ is a projectively Gorenstein $3$-fold.

Let $A= k[w_{1}, w_{2}, w_{3}, z_1, z_2, z_3, z_5, T, S, W]$  be the polynomial ring over $k$ with $w_{1}, w_{2}, w_{3}$ variables of degree $1$ and the other variables of degree $2$. Consider the graded $k$-algebra homomorphism
\begin{center}
$\psi\colon \hat{R}_{un}\rightarrow A$
\end{center}
with
\begin{center}
$\psi(z_1)= z_1$, \, $\psi(z_2)= z_2$, \,   $\psi(z_3)= z_3$, \, $\psi(z_4)= f_1$,
\end{center}
\begin{center}
$\psi(z_5)= z_5$, \, $\psi(z_6)= f_2$, \, $\psi(z_7)= f_3$, \,  $\psi(T)= T$,
\end{center}
\begin{center}
$\psi(S)= S$, \, $\psi(W)= W$
\end{center}
where,

$f_1= l_1z_1+ l_2z_2+ l_3z_3+ l_4z_5+ l_5T+ l_6S+ l_7W+ l_8w_{1}^2+ l_9w_{1}w_{2}+  l_{10}w_{1}w_{3}+ l_{11}w_{2}^2
+ l_{12}w_{2}w_{3}+ l_{13}w_{3}^2$,

$f_2= l_{14}z_1+ l_{15}z_2+ l_{16}z_3+ l_{17}z_5+ l_{18}T+ l_{19}S+ l_{20}W+ l_{21}w_{1}^2+ l_{22}w_{1}w_{2}+  l_{23}w_{1}w_{3}+ l_{24}w_{2}^2
+ l_{25}w_{2}w_{3}+ l_{26}w_{3}^2$,

$f_3= l_{27}z_1+ l_{28}z_2+ l_{29}z_3+ l_{30}z_5+ l_{31}T+ l_{32}S+ l_{33}W+ l_{34}w_{1}^2+ l_{35}w_{1}w_{2}+  l_{36}w_{1}w_{3}+ l_{37}w_{2}^2
+ l_{38}w_{2}w_{3}+ l_{39}w_{3}^2$

\noindent and   $(l_1,\dots, l_{39})\in k^{39}$ are general.

Denote by $Q$ the ideal of the ring A generated by the subset   $\psi(\hat{I}_{un})$.

Let $X= V(Q)\subset \mathbb{P} (1^{3},2^{7})$. It is immediate that  $X\subset \mathbb{P}(1^3, 2^7)$ is a codimension $6$ projectively Gorenstein $3$-fold.

\begin{proposition} \label{constr1!prime}
The ring  $A/Q$ is an integral domain.
\end{proposition}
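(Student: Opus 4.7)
The plan is to show that $A/Q$ is Gorenstein of dimension~$4$ and regular in codimension~$1$; Serre's normality criterion then yields normality of $A/Q$, and connectedness of the positively graded spectrum forces $A/Q$ to be an integral domain. For the first step I would reinterpret $A/Q$ as a quotient of a polynomial extension of $\hat R_{un}$. Setting $B=k[z_1,\dots,z_7,T,S,W,w_1,w_2,w_3]$ and $J=\hat I_{un}B+(z_4-f_1,\,z_6-f_2,\,z_7-f_3)$, one obtains
\[
A/Q \;\cong\; B/J \;\cong\; \hat R_{un}[w_1,w_2,w_3]\big/(z_4-f_1,\,z_6-f_2,\,z_7-f_3).
\]
By Theorem~\ref{main!thm} applied after the generic specialization of $(c_1,\dots,c_{25})$, the ring $\hat R_{un}$ is Gorenstein of dimension~$4$, so $\hat R_{un}[w_1,w_2,w_3]$ is Gorenstein of dimension~$7$. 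Using upper-semicontinuity of fibre dimension in the family parametrised by $(l_1,\dots,l_{39})\in k^{39}$, the equality $\dim A/Q=4$ reduces to a single Macaulay2 or Singular verification at a random rational point. Krull's height theorem then forces the three elements $z_4-f_1,\,z_6-f_2,\,z_7-f_3$ to be a regular sequence on the Cohen--Macaulay ring $\hat R_{un}[w_1,w_2,w_3]$, so $A/Q$ is Gorenstein of dimension~$4$; in particular it satisfies Serre's condition $S_2$.

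For the second step I would apply the Jacobian criterion to the $20$ homogeneous generators of $Q$: one verifies that the ideal of $A/Q$ generated by all $9\times 9$ minors of the $20\times 10$ Jacobian matrix has radical equal to the irrelevant ideal $\mathfrak m=(w_1,w_2,w_3,z_1,z_2,z_3,z_5,T,S,W)$. As signalled in the introduction to this section, this is the computationally demanding step and will be carried out in Singular after fixing a random generic point $(l_1,\dots,l_{39})\in\mathbb{Q}^{39}$; upper-semicontinuity of the singular-locus codimension in the parameter family then transfers the conclusion from this specific point to the generic choice. The outcome is that $A/Q$ is regular in codimension~$1$.

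Combining $S_2$ from the Gorenstein step with $R_1$ from the Jacobian step, Serre's criterion shows that $A/Q$ is normal. Since $A/Q$ is a positively graded $k$-algebra with $(A/Q)_0=k$, it has no nontrivial idempotents, so $\mathrm{Spec}(A/Q)$ is connected; a Noetherian connected normal ring is an integral domain, concluding the proof. The main obstacle is the Jacobian/Singular computation: the $20$ generators are of substantial size and the coefficients are generic, so even the Gr\"obner basis reduction requires care.
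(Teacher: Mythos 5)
Your route is genuinely different from the paper's. The paper's proof of Proposition \ref{constr1!prime} is a single direct computation: for one explicit rational choice of the parameters $c_i,l_j$, Macaulay2 certifies that $Q$ is a homogeneous prime ideal of codimension $6$ with the expected Betti table, and primality for general parameters is inferred from that one instance. You instead derive integrality structurally: $A/Q\cong \hat R_{un}[w_1,w_2,w_3]/(z_4-f_1,\,z_6-f_2,\,z_7-f_3)$ is Gorenstein of dimension $4$ (the three elements cut the dimension from $7$ to $4$, hence form a regular sequence on a Cohen--Macaulay ring), so $S_2$ holds; a Jacobian computation gives $R_1$; Serre's criterion gives normality; and connectedness of a positively graded $k$-algebra with degree-zero part $k$ upgrades normal to domain. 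This buys two things: the only heavy computation you need is the singular-locus one, which the paper must perform anyway for Proposition \ref{Prop!quasismooth}, and the passage from a random rational parameter point to general parameters is cleaner for your conditions (fibre dimension of a homogeneous family is semicontinuous via properness of the projectivized incidence variety) than for primality, where the paper is implicitly relying on openness of geometric integrality in flat families without saying so. The cost is that you must establish $\dim A/Q=4$ and the regular-sequence step up front, and you inherit the same unargued assertion the paper makes, namely that the generic specialization of the $c_j$ preserves the conclusions of Theorem \ref{main!thm} for $\hat R_{un}$.

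One step as written would fail. The singular locus of the codimension-$6$ cone $V(Q)\subset\mathbb{A}^{10}$ is cut out by the $6\times 6$ minors of the Jacobian matrix (the condition is rank $<\operatorname{codim} Q=6$), not by $9\times 9$ minors. Since the Jacobian has rank exactly $6$ at smooth points of the $4$-dimensional equidimensional cone, and rank at most $6$ everywhere on $V(Q)$, all $7\times 7$ minors already vanish on $V(Q)$; hence the vanishing locus of $Q$ together with your $9\times 9$ minors is all of $V(Q)$, of dimension $4$, and the verification that the radical is the irrelevant ideal could never succeed. Replacing $9\times 9$ by $6\times 6$, the required computation is exactly the one in Proposition \ref{Prop!quasismooth} ($\dim A/(Q+J)=0$), which gives smoothness of the punctured affine cone and hence $R_1$ (indeed far more) for $A/Q$. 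With that correction your argument goes through.
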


\begin{proof}
It is enough to show that the ideal $Q$ is prime. The computer algebra program Macaulay2 \cite{GS} gave us that for a specific choice of rational values for the parameters $c_i$, $l_j$, for $1\leq i\leq 25$ and $1\leq j\leq 39$ the ideal which was obtained by $Q$ is a homogeneous, codimension $6$, prime ideal with the right Betti table.
\end{proof}

In what follows, we show that the only singularities of $X\subset \mathbb{P}(1^3, 2^7)$  are $8$ quotient singularities  of type $\frac{1}{2}(1,1,1)$. According to the discussion after Definition \ref{def!fano3fol}, $X$ belongs to the Mori category. 

\begin{proposition}\label{Prop!quasismooth}
Consider   $X= V(Q)\subset \mathbb{P} (1^{3},2^{7})$. Denote by  $X_{cone}\subset \mathbb{A}^{10}$ the affine cone over $X$. 
The scheme  $X_{cone}$ is smooth outside the vertex of the cone.
\end{proposition}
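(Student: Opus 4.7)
The plan is to apply the Jacobian criterion directly. Since $X_{cone} = \mathop{\mathrm{Spec}}(A/Q) \subset \mathbb{A}^{10}$ has dimension $4$ (as $X$ is a $3$-fold) and $Q$ has $20$ generators, smoothness at a point $p \in X_{cone}$ is equivalent to the Jacobian matrix of those $20$ generators having rank $10-4 = 6$ at $p$. Thus the singular locus of $X_{cone}$ is cut out by $Q$ together with the ideal $I_6(J)$ of $6 \times 6$ minors of the Jacobian $J$, and the claim is that $V(Q + I_6(J))$ equals the origin $V(\mathfrak{m})$, where $\mathfrak{m} = (w_1,w_2,w_3,z_1,z_2,z_3,z_5,T,S,W)$.

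Because we only need a \emph{general} choice of the parameters $(c_1,\dots,c_{25}) \in k^{25}$ and $(l_1,\dots,l_{39}) \in k^{39}$, I would use a standard semi-continuity argument: the locus of parameter values for which the projective singular locus of $X \subset \mathbb{P}(1^3,2^7)$ is non-empty is a Zariski-closed subset of the parameter space. Therefore it suffices to exhibit a single parameter value for which $V(Q+I_6(J)) = V(\mathfrak{m})$, and the conclusion for generic parameters follows.

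To exhibit such a value, I would choose specific rational values of $c_i$ and $l_j$, in exact analogy with the specialisation used in Proposition~\ref{constr1!prime} (and ideally compatible with that choice, so that primality and quasi-smoothness are verified simultaneously for one and the same specialisation). Using the computer algebra system Singular, I would then compute the ideal $Q + I_6(J)$, its radical, and check that this radical equals $\mathfrak{m}$; equivalently, that the Krull dimension of $A/(Q+I_6(J))$ is zero and its support is the origin. The Gorenstein codimension $6$ structure guarantees that $Q$ is defined by the $20$ explicit polynomials listed in Definition~\ref{def!mrin} after applying $\psi$, so the Jacobian can be written down exactly.

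The main obstacle I expect is the computational cost: the Jacobian is a $20 \times 10$ matrix of polynomials of degree up to $5$ in $10$ variables, and $\binom{20}{6}\binom{10}{6} = 8{,}139{,}600$ minors is far too many to expand naively. In practice, one would compute $I_6(J) + Q$ symbolically inside Singular (which uses efficient minor strategies together with Gr\"obner basis reduction modulo $Q$ to keep intermediate expressions small), and if needed pass to an affine chart — for example by inverting one of the degree-$1$ variables $w_i$ — to further reduce the number of variables before computing the minors. Provided Singular terminates with the expected result, semi-continuity finishes the proof.
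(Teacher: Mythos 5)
Your proposal is correct and follows essentially the same route as the paper: form the ideal of $6\times 6$ minors of the Jacobian of the $20$ generators of $Q$, verify by computer algebra (the paper uses Singular over the finite field $\mathbb{Z}/(1021)$) that $\dim A/(Q+I_6(J))=0$, and conclude from homogeneity that the singular locus of $X_{cone}$ is at most the vertex. Your explicit appeal to semi-continuity to pass from one specific parameter value to the general member is a point the paper leaves implicit, but it is the same argument in substance.
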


\begin{proof}
We were only able to prove Proposition~\ref{Prop!quasismooth} with the help of the computer algebra program Singular \cite{GPS01}.
Our approach is similar to the approach in \cite[ Page ~ 18]{R3}.
We work over the finite field  $\mathbb{Z}/(1021)$. Differentiating the $20$ equations of  $Q$ with respect to the ten variables gives the $10\times 20$ Jacobian matrix $M^{Jac}$.
Let $J$  be the ideal of $6\times 6$ minors of $M^{Jac}$. The ideal $Q+J$ defines the singular locus of $X_{cone}$. 
Our claim is that the only singularity of the scheme  $X_{cone}$ is the vertex of the cone.
Consider the ideal  $Q+J$. Using the computer algebra program Singular we proved that $\dim ( A/(Q+J))=0$. The ideal $Q+J$ is homogeneous.  Hence, the claim is proven.
\end{proof}

\begin{proposition} \label{sing!specsing}
Consider the singular locus $Z=V(w_1,w_2,w_3)$ of the weighted projective space  $\mathbb{P}(1^3, 2^7)$. The intersection of $X$ with $Z$ consists of exactly eight points which are quotient singularities of type $\frac{1}{2}(1,1,1)$ for $X$.
\end{proposition}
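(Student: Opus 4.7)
The plan is to prove Proposition~\ref{sing!specsing} in two stages: first I reduce the statement to a concrete intersection computation in $\mathbb{P}(2^7) \cong \mathbb{P}^6$ and count the points, and then I read off the singularity type at each point from the tangent space to the affine cone together with the $\mathbb{Z}_2$-stabilizer coming from the weighted grading.

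First, I describe $X \cap Z$ explicitly. Setting $w_1 = w_2 = w_3 = 0$ kills the quadratic monomials $w_iw_j$ appearing in $f_1, f_2, f_3$, so the restrictions $L_r := f_r|_{w=0}$ for $r = 1, 2, 3$ are linear forms in the weight-$2$ variables $z_1, z_2, z_3, z_5, T, S, W$. Hence $X \cap Z$ is the closed subscheme of $\mathbb{P}(2^7)$ cut out by the $20$ polynomials obtained from the generators of $\hat{I}_{un}$ via the substitution $z_4 \mapsto L_1$, $z_6 \mapsto L_2$, $z_7 \mapsto L_3$. Since the parameters $c_i, l_j$ are general and the conditions ``length $\leq 8$'' and ``radical'' are each open on parameter space, it suffices to exhibit a single rational specialization for which the resulting ideal is zero-dimensional, radical and of colength exactly $8$. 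I would carry out this verification with Singular or Macaulay2, in the spirit of the proof of Proposition~\ref{Prop!quasismooth}.

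For the second stage, fix any $p \in X \cap Z$ and choose an affine lift $\tilde p \in \mathbb{A}^{10}$; by construction $w_j(\tilde p) = 0$ for $j = 1, 2, 3$, while some weight-$2$ coordinate of $\tilde p$ is nonzero. Proposition~\ref{Prop!quasismooth} ensures that $X_{cone}$ is smooth at $\tilde p$, so $T_{\tilde p}X_{cone}$ has dimension $4$. Because $(X \cap Z)_{cone}$ is reduced and equals the union of the eight $\mathbb{C}^*$-orbits through these points, it is smooth of dimension $1$ at $\tilde p$; consequently $T_{\tilde p}X_{cone} \cap \{w_1 = w_2 = w_3 = 0\}$ coincides with the $1$-dimensional Euler tangent along this orbit. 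A dimension count then forces $T_{\tilde p}X_{cone}$ to project isomorphically modulo the orbit direction onto $\langle \partial/\partial w_1, \partial/\partial w_2, \partial/\partial w_3 \rangle$.

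Finally, the stabilizer of $\tilde p$ under the weighted $\mathbb{C}^*$-action is the group $\{\pm 1\} \cong \mathbb{Z}_2$, acting trivially on every weight-$2$ direction (in particular on the orbit tangent) and by $-1$ on each $\partial/\partial w_j$. Since the projection from the previous paragraph is manifestly $\mathbb{Z}_2$-equivariant, it identifies the Zariski tangent space $T_p X = T_{\tilde p}X_{cone}/(\text{orbit direction})$ with $\mathbb{C}^3$ carrying the $\mathbb{Z}_2$-action $(-1,-1,-1)$, which is precisely the model for a $\frac{1}{2}(1,1,1)$ quotient singularity. The main obstacle is the first stage: the exact count of eight reduced points is not transparent from the abstract unprojection construction and must be verified by direct computer algebra computation, paralleling the treatment of Proposition~\ref{Prop!quasismooth}.
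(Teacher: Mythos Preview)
Your proposal is correct and follows essentially the same two-stage strategy as the paper: a computer algebra verification that $X\cap Z$ consists of eight points, followed by an appeal to the quasismoothness established in Proposition~\ref{Prop!quasismooth} together with the $\mathbb{Z}_2$-stabiliser on the weight-$1$ directions to identify each point as a $\frac{1}{2}(1,1,1)$ singularity.

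There is one small packaging difference worth noting. For the second stage you argue indirectly: the radical check in stage one gives that $(X\cap Z)_{cone}$ is a union of eight lines, hence $T_{\tilde p}X_{cone}\cap\{w=0\}$ is $1$-dimensional, and then a dimension count forces the remaining three tangent directions into the $w$-span. The paper instead makes the direct observation that every generator of $Q$ depends on the $w_i$ only through the quadratic monomials $w_iw_j$ occurring in $f_1,f_2,f_3$, so the three $w$-rows of the Jacobian vanish identically along $\{w=0\}$; quasismoothness then forces a nonzero $6\times 6$ minor among the seven weight-$2$ rows, which immediately gives the tangent-space splitting (and incidentally recovers the reducedness you checked by computer). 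Both routes are valid; the paper's is slightly more direct in that it does not require the separate radical verification.
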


\begin{proof}
Using the computer algebra program Macaulay2 \cite{GS}, we showed that the dimension of the  intersection of $X$ with $Z$ is equal to zero.  Hence, the intersection of $X$ with $Z$ is a finite set of points.
More precisely, using Macaulay2  we computed that the  intersection of $X$ with $Z$ consists of  of exactly eight points. In order to prove that the eight points  are quotient singularities of type $\frac{1}{2}(1,1,1)$ for $X$, 
we observe that the first three rows of the matrix which occurs from the jacobian matrix $M^{Jac}$ of  $Q$ by setting the variables  $w_1, w_2, w_3$  be equal to zero, are zero. Hence, due to the Proposition \ref{Prop!quasismooth}, there exists a non-zero $6\times 6$ minor in  six out of seven variables of degree $2$. In that way, we conclude that the eight points are quotient singularities  of type $\frac{1}{2}(1,1,1)$ for $X$.
\end{proof}

\begin{lemma}\label{lem!canonmod}
Let $\omega_{\hat{R}/\hat{I}}$ be the canonical module of $\hat{R}/\hat{I}$. It holds that  the canonical module $\omega_{\hat{R}/\hat{I}}$ is isomorphic to $\hat{R}/\hat{I}(-4)$.
\end{lemma}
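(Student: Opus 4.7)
The plan is to compute the canonical module directly from the graded free resolution of $\hat R/\hat I$ provided by the Buchsbaum--Eisenbud structure theorem (Theorem~\ref{thm!serbusc}(2)) and then apply the local-duality identity $\omega_{\hat R/\hat I}=\operatorname{Ext}^{3}_{\hat R}(\hat R/\hat I,\omega_{\hat R})$.

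First I would pin down the degrees. Under the grading used in Subsection~\ref{constr!1}, each $z_i$ has degree~$2$ and the $c_j$ are scalars, so every entry of the matrix $Tom(1,2,3)$ is homogeneous of degree $2$. Consequently the five pfaffians generating $\hat I$ are homogeneous of degree $4$. Since $\hat R=k[z_1,\dots,z_7]$ with $\deg z_i=2$, one has $\omega_{\hat R}\cong \hat R(-14)$.

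Next I would invoke the Buchsbaum--Eisenbud resolution. Proposition~\ref{thm!assum} together with the pfaffian structure shows that $\hat I$ is a codimension~$3$ Gorenstein ideal, hence admits a self-dual resolution of the form
\[
0\longrightarrow \hat R(-s)\longrightarrow \bigoplus_{i=1}^{5}\hat R(-\alpha_i)\xrightarrow{\;Tom(1,2,3)\;}\bigoplus_{i=1}^{5}\hat R(-\beta_i)\longrightarrow \hat R\longrightarrow \hat R/\hat I\longrightarrow 0.
\]
The pfaffians have degree $4$, so $\beta_i=4$ for each $i$. Self-duality forces $\alpha_i=s-\beta_i$, and the entries of the middle skew-symmetric matrix have degree $\alpha_j-\beta_i=2$; solving $s-2\beta=2$ with $\beta=4$ yields $s=10$ and $\alpha_i=6$. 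Thus the resolution reads
\[
0\longrightarrow \hat R(-10)\longrightarrow \hat R(-6)^{5}\longrightarrow \hat R(-4)^{5}\longrightarrow \hat R\longrightarrow \hat R/\hat I\longrightarrow 0.
\]

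Finally I would apply $\operatorname{Hom}_{\hat R}(-,\hat R)$ to this resolution. Self-duality of the Buchsbaum--Eisenbud complex gives $\operatorname{Ext}^{3}_{\hat R}(\hat R/\hat I,\hat R)\cong \hat R/\hat I(10)$, and therefore
\[
\omega_{\hat R/\hat I}\;\cong\;\operatorname{Ext}^{3}_{\hat R}(\hat R/\hat I,\omega_{\hat R})\;\cong\;\operatorname{Ext}^{3}_{\hat R}(\hat R/\hat I,\hat R)(-14)\;\cong\;\hat R/\hat I(10-14)\;=\;\hat R/\hat I(-4),
\]
which is the desired conclusion. The main thing to be careful about is purely bookkeeping of the graded twists; the only non-formal input is that generality of the substituted $c_j$ preserves the codimension~$3$ property of $\hat I$ so that Buchsbaum--Eisenbud applies, which follows from the codimension computation in Proposition~\ref{thm!assum} (the generic value was already used in that proof via specialization).
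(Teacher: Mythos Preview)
Your proof is correct and follows essentially the same approach as the paper: write down the Buchsbaum--Eisenbud resolution $0\to \hat R(-10)\to \hat R(-6)^5\to \hat R(-4)^5\to \hat R$, observe that the sum of the degrees of the variables of $\hat R$ equals $14$, and read off the shift $10-14=-4$. You simply supply more of the bookkeeping (deriving the twists from the degree~$2$ entries and the self-duality, and spelling out the local-duality identification $\omega_{\hat R/\hat I}\cong\operatorname{Ext}^3_{\hat R}(\hat R/\hat I,\hat R)(-14)$) than the paper does.
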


\begin{proof}
From the minimal graded free resolution of $\hat{R}/\hat{I}$ as  $\hat{R}$-module
\[
0\rightarrow \hat{R}(-10)\rightarrow \hat{R}(-6)^{5}\rightarrow \hat{R}(-4)^{5}\rightarrow \hat{R}  
\]
and the fact that the sum of the degrees of the variables is equal to $14$ we conclude that 
\[\omega_{\hat{R}/\hat{I}}= \hat{R}/\hat{I}(10-14)= \hat{R}/\hat{I}(-4).\]
\end{proof}

\begin{proposition} \label{prop!gradres}
The minimal graded resolution of $A/Q$ as $A$-module is equal to
\begin{equation} \label{eq!resR}
0\rightarrow A(-16)\rightarrow A(-12)^{20}\rightarrow A(-10)^{64}\rightarrow A(-8)^{90}\rightarrow A(-6)^{64}\rightarrow A(-4)^{20}\rightarrow A 
\end{equation}
Moreover, the canonical module of $A/Q$ is isomorphic to  $(A/Q)(-1)$ and the Hilbert Series of  $A/Q$  as graded $A$-module is equal to 
\[
\frac{1-20t^4+64t^6-90t^8+64t^{10}-20t^{12}+t^{16}}{(1-t)^3(1-t^2)^7}.
\] 
\end{proposition}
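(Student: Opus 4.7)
The plan is to realize $A/Q$ as a Gorenstein quotient of a polynomial extension of $\hat R_{un}$ by a regular sequence, extract the overall shape of the minimal free resolution from Gorenstein self-duality together with the degrees of the $20$ generators of $Q$, and pin down the middle Betti numbers via the Macaulay2 verification already invoked in Proposition~\ref{constr1!prime}.

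First I would observe that $\psi$ extends to an isomorphism
\[
A/Q \;\cong\; \hat R_{un}[w_1, w_2, w_3] \,\big/\, \bigl(z_4 - f_1,\, z_6 - f_2,\, z_7 - f_3\bigr),
\]
graded as in Section~\ref{constr!1} so that all of $z_i, T, S, W$ have degree $2$ and $w_1, w_2, w_3$ have degree $1$; the three relations are then homogeneous of degree $2$. Dimension bookkeeping --- $\dim \hat R_{un} = 10 - 6 = 4$ by Theorem~\ref{main!thm}, so $\dim \hat R_{un}[w_1, w_2, w_3] = 7$, while $\dim A/Q = 4$ as forced by Proposition~\ref{Prop!quasismooth} --- shows that for generic $l_j$ the three elements cut dimension by exactly three and therefore form a regular sequence on the Gorenstein ring $\hat R_{un}[w_1,w_2,w_3]$. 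Since the quotient of a Gorenstein ring by a regular sequence is Gorenstein, $A/Q$ is a Gorenstein graded $k$-algebra of dimension $4$, equivalently of codimension $6$ in the $10$-dimensional polynomial ring $A$.

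Next I would compute the canonical module by twist bookkeeping. Starting from $\omega_{\hat R/\hat I} \cong (\hat R/\hat I)(-4)$ (Lemma~\ref{lem!canonmod}) and using the fact that each parallel Kustin--Miller unprojection adjoins a degree-$2$ variable and increases the top shift of the minimal resolution by the same $2$ --- changes that exactly cancel in the canonical-module formula $\omega \cong (R/J)(n - \sum \deg x_i)$ --- one obtains $\omega_{\hat R_{un}} \cong \hat R_{un}(-4)$. The polynomial extension by $w_1, w_2, w_3$ (each of degree $1$) contributes $-3$ to the twist, and the regular sequence quotient of total degree $6$ contributes $+6$, so
\[
\omega_{A/Q} \;\cong\; (A/Q)(-4 - 3 + 6) \;=\; (A/Q)(-1).
\]

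Finally I would pin down the resolution. The ideal $Q$ has $20$ minimal generators, all of degree $4$ in the chosen grading: the $5$ pfaffians of the Tom matrix, the $4+4+4 = 12$ tag equations $Tz_j - \phi_1(z_j)$ and their analogues for $S, W$, and the $3$ quadratic unprojection equations $(T+r_{12})(S+r_{21}) - A_{12}$, etc. The canonical module identity $\omega_{A/Q} \cong (A/Q)(n - 17)$ (with $17 = 3 \cdot 1 + 7 \cdot 2$ the sum of variable degrees of $A$) forces the top shift $n = 16$, and Gorenstein self-duality constrains the resolution to the form
\[
0 \to A(-16) \to A(-12)^{20} \to A(-10)^{b} \to A(-8)^{c} \to A(-6)^{b} \to A(-4)^{20} \to A
\]
with Hilbert series numerator $1 - 20 t^4 + b t^6 - c t^8 + b t^{10} - 20 t^{12} + t^{16}$. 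The Macaulay2 verification of Proposition~\ref{constr1!prime} establishes the ``right Betti table'' for a specific rational specialization, yielding $b = 64$ and $c = 90$; by upper semicontinuity of Betti numbers and constancy of the Hilbert series in the flat locus of the family of generic specializations, these values propagate to the generic case. The principal difficulty is precisely ruling out Betti collapse in the middle positions under specialization, which is exactly what the explicit rational-parameter Macaulay2 calculation controls.
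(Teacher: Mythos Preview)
Your argument is largely sound and takes a somewhat different route from the paper. The paper computes the resolution first, by the method of \cite[Proposition~3.4]{NP1} (a specific rational specialisation together with semicontinuity), and then reads off the canonical module as $(A/Q)(16-17)$ and the Hilbert series directly from the Betti numbers. You reverse the logical order: you establish Gorensteinness of $A/Q$ via the regular-sequence presentation $\hat R_{un}[w_1,w_2,w_3]/(z_4-f_1,z_6-f_2,z_7-f_3)$, compute $\omega_{A/Q}$ by tracking twists through unprojection, polynomial extension and regular-sequence quotient, and only then use the top shift $16$ together with the Macaulay2 check to determine the resolution. Your derivation of the canonical module is more conceptual and does not presuppose knowing the resolution; the paper's is shorter once the resolution is already in hand. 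Both arguments ultimately rest on the same computer verification plus semicontinuity for the Betti table.

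Two points should be tightened. First, the assertion that Gorenstein self-duality alone forces the pure shape
\[
A(-4)^{20}\leftarrow A(-6)^{b}\leftarrow A(-8)^{c}\leftarrow A(-10)^{b}\leftarrow A(-12)^{20}
\]
is not justified: self-duality gives symmetry about $(-8)$, not purity, and since $A$ contains degree-$1$ variables $w_i$ there is no a~priori obstruction to syzygies in degree~$5$. This does not damage your conclusion, because the Macaulay2 verification you invoke from Proposition~\ref{constr1!prime} already records the full Betti table (purity included) at the special point, and that is what semicontinuity propagates; but as written the argument overclaims what duality alone buys. Second, the step ``each unprojection adjoins a degree-$2$ variable and increases the top shift by the same $2$'' is the \emph{conclusion} that Kustin--Miller unprojection preserves the $a$-invariant, not a proof of it; this is a theorem (see \cite{PR} or \cite{NP2}) and should be cited as such rather than presented as elementary bookkeeping.
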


\begin{proof}
To compute the minimal graded free resolution  of $A/Q$ we followed the method described in the proof of  \cite[ Proposition~ 3.4 ]{NP1}.
From the minimal graded free resolution (\ref{eq!resR}) of $A/Q$ and the fact that the sum of the degrees of the variables is equal to $17$ we conclude that 
\[\omega_{A/Q}= A/Q(16-17)= A/Q(-1).\]
The last conclusion of Proposition \ref{prop!gradres} follows easily from the resolution (\ref{eq!resR}). 
\end{proof}

Taking into account the Propositions \ref{Prop!quasismooth}, \ref{sing!specsing} and \ref{prop!gradres}, we conclude that $X$ is a Fano $3$-fold.

\subsection{Construction of Graded Ring Database entry with ID: 12979} \label{construct!id12979}
In this subsection, we sketch the construction for the family described in Theorem~\ref{constr!12979}.

Denote by $k=\mathbb{C}$ the field of complex numbers. Working as before,  consider the polynomial ring   $R= k[z_i, c_j]$ where $1\leq  i \leq 7$  and   $1\leq  j\leq 25$. Let  $R_{un}$ be the ring in Definition ~ \ref{def!mrin}. 
We substitute the variables $(c_2, c_3, c_4, c_6, c_7, c_8, c_{10}, c_{12}, c_{13}, \dots , c_{25})$ which appear in the definitions of the rings $R$ and $R_{un}$ with a general element of $k^{21}$ (in the sense of being outside a proper Zariski closed subset of $k^{21}$).  We denote by $\hat{R}$ the polynomial ring which occurs from $R$ and $\hat{ R}_{un}$ the ring which occurs from   R$_{un}$ after this substitution. Let $\hat{I}$ be the ideal which is obtained by the ideal $I$ and $\hat{ I}_{un}$ the ideal which obtained by the ideal $I_{un}$  after this substitution. In what follows we assume that the variables $z_1 ,c_1,  c_5, c_9,  c_{11}$ are variables of degree $1$, the variables $z_2,\dots, z_7, T$ are variables of degree $2$ and the variables $S, W$ are variables of degree $3$.
Under this grading, the ideals $\hat{I}$ and $\hat{ I}_{un}$ are homogeneous. Due to the Theorem  \ref{main!thm},   $\text{Proj} \   \hat{R}_{un}\subset \mathbb{P} (1^{5}, 2^{7}, 3^{2})$  is a projectively Gorenstein $7$-fold. 

Let $A= k[z_1, c_5, c_9,  z_2, z_3,  z_5, z_6,  T, S, W]$ be the polynomial ring with $z_1, c_5, c_9$ variables of degree $1$, $z_2, z_3,  z_5, z_6,T$ variables of degree $2$ and $S, W$ are variables of degree $3$. We consider the graded $k$-algebra homomorphism

\begin{center}
$\psi \colon  \hat{R}_{un}\rightarrow A$
\end{center}
with
\begin{center}
$\psi(z_1)= z_1$, \, $\psi(c_1)= g_1$, \, $\psi(c_5)= c_5$, \, $\psi(c_9)= c_9$,
\end{center}
\begin{center}
 $\psi(c_{11})= g_2$, \, $\psi(z_2)= z_2$, \, $\psi(z_3)= z_3$, \, $\psi(z_4)= f_1$,
\end{center}
\begin{center}
$\psi(z_5)= z_5$, \, $\psi(z_6)= z_6$, \, $\psi(z_7)= f_2$, \, $\psi(T)= T$,
\end{center}
\begin{center}
$\psi(S)= S$, \, $\psi(W)= W$
\end{center}
where,

$g_1= l_1 z_1+ l_2 c_5+ l_3 c_9$,

$g_2= l_4 z_1+ l_5c_5+ l_6c_9$,

$f_1= l_7z_2+ l_8z_3+ l_9z_5+ l_{10}z_6+  l_{11} T+  l_{12}z_1^2+  l_{13}z_1c_5+ l_{14}z_1c_9+ l_{15}c_5^2+ l_{16}c_5c_9+ l_{17}c_9^2$,

$f_2= l_{18}z_2+ l_{19}z_3+ l_{20}z_5+ l_{21}z_6+  l_{22}T+  l_{23}z_1^2+  l_{24}z_1c_5+ l_{25}z_1c_9+ l_{26} c_5^2+ l_{27}c_5c_9+ l_{28}c_9^2$

\noindent and   $(l_1,\dots, l_{28})\in k^{28}$ are general.

Denote by $Q$ the ideal of the ring $A$ generated by the subset   $\psi(\hat{I}_{un})$.

Let $X= V(Q)\subset \mathbb{P} (1^{3}, 2^{5},3^{2})$.  It is immediate that  $X\subset \mathbb{P} (1^{3}, 2^{5},3^{2})$ is a codimension $6$ projectively Gorenstein $3$-fold. 

Repeating the arguments which were used for the construction which was described in Subsection \ref{constr!1} we proved that   $X\subset \mathbb{P}(1^3, 2^5,3^2)$ is a Gorenstein Fano $3$-fold nonsingular away from four quotient singularities  $\frac{1}{2}(1,1,1)$ and two quotient singularities  $ \frac{1}{3}(1,1,2)$.

\section* {Acknowledgements} 

\label{sec!acknowledgements}
I would like to express my deep gratitude to my supervisor Stavros Papadakis for his valuable guidance and ideas which contributed immensely in the present work. As well as for the important suggestions which have improved the presentation of the paper. I benefited from experiments with the computer algebra program Macaulay2~\cite{GS}. This work is part of a Univ. of Ioannina Ph.D. thesis, financially supported by the Special Account for Research Funding (E.L.K.E) of University of Ioannina (UOI) under the program with code $82561$ and title 
\q{Program of financial support for Ph.D. students and postdoctoral researchers}.

\end{document}